\documentclass[12 pt]{amsart}

\usepackage{graphicx}
%\usepacxige{times}
\usepackage{color}
\usepackage{amssymb, amsmath, amsthm}
\usepackage{mathptmx}
\usepackage{comment}

%Pacxiges for Scott's method of doing figures
\usepackage{epsfig}
\usepackage{epstopdf}
\usepackage{graphicx}
\usepackage{pinlabel}
\usepackage{enumerate}

\theoremstyle{plain}

\newtheorem{theorem}{Theorem}
\newtheorem{lemma}{Lemma}
\newtheorem{corollary}{Corollary}

\newtheorem{question}{Question}

\newtheorem{case}{Case}

%MATH CHARACTER COMMANDS
\newcommand{\R}{\mathbb R} %REALS

 %COMPLEX
 %NATURAL NUMBERS
 %RATIONALS

\newcommand{\bi}{\begin{itemize}}
\newcommand{\ei}{\end{itemize}}
\newcommand{\be}{\begin{enumerate}}
\newcommand{\ee}{\end{enumerate}}

\newcommand{\n}{\beta}

\newcommand{\emp}{\emptyset}
\newcommand{\X}{\times}

\newcommand{\A}{\alpha}
\newcommand{\pd}{\partial}

%LOWER CASE V IN BOLDFACE

%SYMBOL TAU IN BOLD

\numberwithin{definition}{section}
\numberwithin{example}{section}
\numberwithin{lemma}{section}
\numberwithin{theorem}{section}
\numberwithin{corollary}{section}

\begin{document}
\title{Knots with compressible thin levels}
\author[R. Blair]{Ryan Blair}
\author[A. Zupan]{Alexander Zupan}

\thanks{The second author is supported by the National Science Foundation under Award No. DMS-1203988.}

\maketitle

\begin{abstract}
We produce embeddings of knots in thin position that admit compressible thin levels. We also find the bridge number of tangle sums where each tangle is high distance.
\end{abstract}

\section{Introduction}

Thin position has contributed to many advances in the study of knots and 3-manifolds. Gabai defined thin position for knots in $S^3$ and employed the notion in his proof of Property R~\cite{Ga}. Subsequently, thin position was utilized in the solution to the knot complement problem~\cite{gl}, the recognition problem for $S^3$~\cite{R,AT}, and the leveling of unknotting tunnels~\cite{GST}.

The usefulness of thin position for knots in $S^3$ is grounded in its connections to the topology of the knot exterior. In particular, Thompson showed that a knot in thin position that is not bridge position contains an essential planar surface in its exterior~\cite{thomps}. Wu strengthened this result to show that every thinnest thin level of a knot in thin position is essential in the knot exterior \cite{wu}. Additional results have furthered our understanding of essential surfaces and thin position \cite{HK,tomova1}. These results motivate the natural question,

\begin{question}
Given a knot in thin position, are all thin levels essential in the knot exterior?
\end{question}

This question was originally considered by Thompson during her work on the recognition problem for $S^3$. In the context of that problem as well as many others, the possibility of a knot or graph in thin position admitting a compressible thin level presents a significant technical challenge. It has long been believed that these challenges are endemic and that there exist knots in thin position that admit a compressible thin level. In fact, candidate examples are well known in the community. See Figure \ref{Fig:thinK}. However, demonstrating that one of these candidate embeddings is in thin position has proved to be a difficult problem. In this paper, we use recent advancements in the study of the distance of bridge surfaces \cite{bty,johntom} to prove the following:

\begin{theorem}\label{main}
There are infinitely many knots $K$ that admit a thin position with a compressible thin level.
\end{theorem}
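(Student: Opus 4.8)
The plan is to produce an explicit infinite family of knots, each built by gluing together tangles of very large bridge distance, for which a carefully chosen height function can be proved to realize thin position, and which — by the way the pieces are assembled — has a level $4$-punctured sphere sitting as a \emph{compressible} thin level. The role of the large-distance hypotheses is to make the assembled configuration rigid enough that its width cannot be beaten; without such a hypothesis this is exactly the point at which the classical candidate embeddings of Figure~\ref{Fig:thinK} have resisted analysis.

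For the construction, fix three $2$-string tangles $T_0, T_1, T_2$ whose bridge distances are large relative to their bridge numbers (such tangles exist in abundance), and glue them along $4$-punctured spheres so as to form a knot $K$: place $T_0$ on one side of a $4$-punctured sphere $P$, and on the other side place two arcs, one running into a copy of $T_1$ and the other into a copy of $T_2$, arranged so that these two arcs together form a \emph{split} tangle. Then the splitting disk is a compressing disk for $P$ in $S^3\setminus K$, so $P$ is a compressible $4$-punctured sphere. Next choose a Morse function $h$ on $S^3$ for which $P$ is a regular level, $T_0$ lies below $P$ in its tangle bridge position, and the two arcs (with their $T_1$ and $T_2$ decorations) lie above $P$, each in a most efficient position. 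A direct computation gives $w(h,K)$, and by construction $P$ occurs as a thin level of this position. It remains to show that $h$ realizes thin position.

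This last step is the crux and the main obstacle. Let $h'$ be any thin position of $K$, so $w(h',K)\le w(h,K)$. The large-distance hypotheses are tailored so that the bridge sphere of each $T_i$ must appear, up to isotopy, among the thick levels of the thin/thick decomposition associated to $h'$: by the results of \cite{bty,johntom} on distance of bridge surfaces, together with the analysis of tangle sums developed later in the paper, a surface of small enough complexity cannot "get past" a sufficiently high-distance tangle, which forces the corresponding bridge sphere to survive into any generalized decomposition of $(S^3,K)$. This identifies, inside $h'$, three disjoint sub-configurations whose widths are each bounded below by the width of the corresponding $T_i$ in bridge position; adding these lower bounds, and accounting for the spanning strands that must run between the pieces, reproduces $w(h,K)$. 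Hence $w(h',K)=w(h,K)$, so $h$ is thin, and the same rigidity shows that a level isotopic to the compressible sphere $P$ must appear as a thin level in \emph{every} thin position of $K$.

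Finally, since there are infinitely many choices of (say) $T_0$ of arbitrarily large bridge distance, and since the high-distance pieces are recovered up to isotopy from $K$ — they appear as the thick levels produced above — only finitely many of our knots can coincide; thus infinitely many knots $K$ admit a thin position with a compressible thin level. Compared with this, verifying compressibility of $P$, computing the width of the model, and the distinctness count are routine; essentially everything hard is concentrated in the lower bound $w(K)\ge w(h,K)$, that is, in converting "each $T_i$ is high distance" into genuine control over the thin/thick decompositions of $(S^3,K)$.
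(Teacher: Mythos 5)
Your proposed construction has a fundamental obstruction that the paper takes pains to avoid.  You claim the compressible thin level is a $4$-punctured sphere $P$.  But a $4$-punctured thin level is necessarily a thinnest thin level for a prime knot (no thin level can have width $<4$), and Wu's Theorem (Theorem~\ref{thinner} in the paper) says that in any thin position, \emph{every} thinnest thin level is essential, hence incompressible.  So a width-$4$ compressible thin level cannot occur in thin position of a prime knot; and if you allowed $K$ to be composite so that some width-$2$ sphere were thinner, you would lose Tomova's Theorem~\ref{plustwo} (which requires primeness) but the width-$2$ levels would then be essential summing spheres and $P$ would still have width exactly two more than the minimum, landing you in precisely the situation Tomova's theorem is built to rule out.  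The authors are acutely aware of this: their compressible thin level $S_3$ is an $8$-punctured sphere (a tube-sum of two $4$-punctured spheres $S_1$ and $S_2$), and the candidate $k'$ also carries a genuinely thinner, essential thin level $S_1$ of width $4$, which is what allows $S_3$ to be compressible without contradicting Wu or Tomova.  This is not a cosmetic difference — it is exactly the structural point that makes the theorem nontrivial, and your construction misses it.

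Two further gaps.  First, your description of the knot --- ``two arcs, one running into a copy of $T_1$ and the other into a copy of $T_2$, arranged so that these two arcs together form a split tangle'' --- does not pin down a well-defined knot: a $2$-string tangle has four endpoints, so a single arc cannot ``run into'' such a tangle, and if the upper tangle really is split then compressing $P$ along the splitting disk produces $2$-punctured spheres, forcing either compositeness of $K$ or triviality of one side, and in the latter case the position is visibly not thin.  By contrast, the paper glues two $2$-strand tangles $(B_1,\tau_1)$ and $(B_2,\tau_2)$ to the two boundary spheres of a four-strand tangle $(B_3,\tau_3)$ sitting in a twice-punctured $3$-ball, where $(B_3,\tau_3)$ comes from a highly twisted $10$-plat with combinatorial restrictions on the twist parameters $a_{i,j}$.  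Second, you acknowledge that ``essentially everything hard is concentrated in the lower bound $w(K)\ge w(h,K)$'' but you do not supply that argument; invoking ``\cite{bty,johntom} $\ldots$ a surface of small enough complexity cannot get past a sufficiently high-distance tangle'' is not a proof.  The paper devotes Sections~\ref{Sec:thinposition} and~\ref{final-case} to an exhaustive case analysis on the number and widths of thin levels (Cases~\ref{C1}--\ref{C7}), with the final case requiring an additional, quite specific choice of the $a_{i,j}$ so that each of six auxiliary knots $K_{ij}$ is a nontrivial alternating knot.  None of that machinery appears in your outline, and without it the lower bound on width does not follow.
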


Our main result supports the emerging theme that the fundamental properties of thin position are quite subtle. Unlike the closely related invariant of bridge number, width (an integer invariant derived from thin position) is not additive with respect to connected sum~\cite{blairtom}. Additionally, thin position of a knot may not minimize bridge number~\cite{blairtom}. Finally, there exist embeddings of the unknot that cannot be isotoped to the standard unknot through embeddings of non-increasing width~\cite{Z}.

Our work also reinforces the notion that knots and 3-manifolds composed of sufficiently complicated pieces have predictable topology; the proof of Theorem \ref{main} demonstrates that the obvious embeddings of the candidate knots $K$ coincide with their thin positions.  In terms of 3-manifolds, this concept is apparent in work of Kobayashi-Qui \cite{KobQui}, which demonstrates that if sufficiently complicated compact 3-manifolds are glued together along a common boundary component, then the Heegaard genus of the resulting 3-manifold is as expected.

In the setting of knots in $S^3$, the natural analogue of 3-manifold amalgamation is the gluing together of two collections of arcs each contained in a 3-ball, an operation known as tangle summation.  In general, it is difficult to predict the bridge number of a tangle sum; for instance, the tangle sum of two nontrivial tangles can give rise to the unknot.  In \cite{Blair1}, the first author studies the degeneration of bridge number for tangle sums of 2-strand tangles, and in \cite{Blair2}, he gives a lower bound for the bridge number of a tangle sum with some mild restrictions on the tangles being glued together.  However, as with 3-manifolds, if we restrict our tangles by requiring that they have sufficiently complicated bridge surfaces, the picture becomes much clearer.

Using the techniques mentioned above, we prove the complete analogue of the main theorem in \cite{KobQui} for the bridge number of knots in $S^3$ (see Section \ref{bridge-surface} for relevant definitions):

\begin{theorem}\label{main2}
Suppose that $(B_1,\tau_1)$ and $(B_2,\tau_2)$ are $n$-strand tangles with $n>1$, and let $K$ be a knot in $S^3$ that is the tangle sum of $(B_1,\tau_1)$ and $(B_2,\tau_2)$.  In addition, let $\Sigma_i$ be a $\n_i$-bridge sphere for $(B_i,\tau_i)$, where $\n_i > n$.  If $d(\Sigma_i) > 2(\n_1 + \n_2 - n)$ for $i=1,2$, then
\[ b(K) = \n_1 + \n_2 - n.\]
\end{theorem}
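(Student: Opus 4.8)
The plan is to establish $b(K)\le \n_1+\n_2-n$ and $b(K)\ge \n_1+\n_2-n$ separately; the first is a soft amalgamation argument and the second is where the distance hypotheses do all of the work. Write $P$ for the $2n$-punctured sphere along which $(B_1,\tau_1)$ and $(B_2,\tau_2)$ are glued to form $(S^3,K)$. For the upper bound I would amalgamate $\Sigma_1$ and $\Sigma_2$ across $P$: each $\Sigma_i$ cuts $(B_i,\tau_i)$ into a pair of trivial tangles, and pushing the $n$ strands of $\tau_i$ meeting $P$ into a collar of $P$ and then amalgamating exactly as one amalgamates Heegaard surfaces across an incompressible surface produces a bridge sphere $\Sigma^\ast$ for $K$ with $\chi(\Sigma^\ast\setminus K)=\chi(\Sigma_1\setminus\tau_1)+\chi(\Sigma_2\setminus\tau_2)-\chi(P\setminus K)$, that is, with $\n_1+\n_2-n$ bridges. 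Hence $b(K)\le \n_1+\n_2-n$, and no hypothesis on distance is used.

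For the lower bound, suppose toward a contradiction that $\Sigma$ is a bridge sphere for $K$ with $b=b(K)<\n_1+\n_2-n$, so that $-\chi(\Sigma\setminus K)=2b-2$. First I would observe that $P$ is essential in $S^3\setminus K$: it cannot cobound a trivial tangle with either side since $\n_i>n$, and it cannot compress or $\partial$-compress in $B_i\setminus\tau_i$ since that would force $d(\Sigma_i)\le 2$. Then isotope $\Sigma$ to minimize $|\Sigma\cap P|$ and use the usual innermost-disk surgeries (retaining a resulting sphere that is still a bridge sphere for $K$) so that every circle of $\Sigma\cap P$ is essential in the $2n$-punctured sphere $P\setminus K$. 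Next I would untelescope $\Sigma$ relative to the essential surface $P$ and amalgamate all thick surfaces lying in a common $B_i$. Here the distance hypothesis is used essentially: a thin surface of the decomposition lying in some $B_i$ would be an essential surface in $B_i\setminus\tau_i$ with $-\chi$ at most $-\chi(\Sigma\setminus K)=2b-2<d(\Sigma_i)$, which is ruled out by the Hartshorn-type bound attached to $d(\Sigma_i)$. So the untelescoping collapses to a generalized bridge decomposition of $(S^3,K)$ with a single thin surface $P\setminus K$ and single thick surfaces $H_1\subset B_1$, $H_2\subset B_2$, each $H_i$ a bridge sphere for $(B_i,\tau_i)$, satisfying $\chi(\Sigma\setminus K)=\chi(H_1\setminus\tau_1)+\chi(H_2\setminus\tau_2)-\chi(P\setminus K)$ and (after the standard cleanup, since the complementary piece $\Sigma\cap B_{3-i}$ then has nonnegative $-\chi$) the bound $-\chi(H_i\setminus\tau_i)\le -\chi(\Sigma\setminus K)=2b-2$.

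Now I would apply the comparison theorem for bridge distance from \cite{bty,johntom}. From $-\chi(H_i\setminus\tau_i)\le 2b-2$ we get $2-\chi(H_i\setminus\tau_i)\le 2b<2(\n_1+\n_2-n)<d(\Sigma_i)$; since $H_i$ and $\Sigma_i$ are both bridge spheres for $(B_i,\tau_i)$ and $\Sigma_i$ is unperturbed (its distance exceeds $1$), the comparison theorem forces $H_i$ to be a perturbation of $\Sigma_i$ up to isotopy, and in particular $b(H_i)\ge \n_i$, i.e. $\chi(H_i\setminus\tau_i)\le 2-2\n_i$. Feeding this, together with $\chi(P\setminus K)=2-2n$, into the identity $\chi(\Sigma\setminus K)=\chi(H_1\setminus\tau_1)+\chi(H_2\setminus\tau_2)-\chi(P\setminus K)$ gives $2-2b\le (2-2\n_1)+(2-2\n_2)-(2-2n)$, that is $b\ge \n_1+\n_2-n$, contradicting $b<\n_1+\n_2-n$. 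Therefore $b(K)=\n_1+\n_2-n$.

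I expect the main obstacle to be the untelescoping step together with the control of Euler characteristics inside it: one must verify that cutting the bridge sphere $\Sigma$ along the essential surface $P$ and amalgamating really does yield honest bridge spheres $H_i$ for the tangles $(B_i,\tau_i)$ — that the trivial-tangle structure on the two sides of $\Sigma$ restricts correctly across $P$, that disk, sphere, and $\partial$-parallel components of $\Sigma\cap B_i$ can be removed, and that no low-complexity essential surface in the $B_i$ obstructs the amalgamation — so that the accounting closes with the sharp constant $\n_1+\n_2-n$ rather than a weaker bound. This is the bridge-sphere analogue of the technical core of Kobayashi--Qiu \cite{KobQui}, with the distance hypotheses on $\Sigma_1$ and $\Sigma_2$ playing the role of their ``sufficiently complicated'' condition.
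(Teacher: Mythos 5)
Your upper bound is the same as the paper's (amalgamate $\Sigma_1$ and $\Sigma_2$ across $P$ to exhibit an explicit $(\n_1+\n_2-n)$-bridge position). For the lower bound, however, you take a genuinely different route from the paper, and the critical step of that route is left as an acknowledged obstacle rather than resolved; as written there is a gap.

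Concretely, the step ``untelescope $\Sigma$ relative to the essential surface $P$ and amalgamate all thick surfaces lying in a common $B_i$'' is not a standard operation and is not justified. Weak reduction/untelescoping (in either the Heegaard or the bridge setting) produces some generalized splitting whose thin levels are incompressible, but you do not get to prescribe $P$ as a thin level in advance; making $P$ appear as the thin level is precisely the hard comparison problem, not a routine cleanup. Related to this, the claim that innermost-disk surgeries on $\Sigma\cap P$ can be performed while ``retaining a resulting sphere that is still a bridge sphere for $K$'' is unjustified: a curve of $\Sigma\cap P$ that is inessential in $P_K$ may be essential in $\Sigma_K$, and surgering $\Sigma$ along it destroys the bridge-sphere property. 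And once these two steps are questioned, the Euler-characteristic accounting $-\chi(H_i)\le -\chi(\Sigma_K)$ and the identity $\chi(\Sigma_K)=\chi(H_1)+\chi(H_2)-\chi(P_K)$ — the engine of your final contradiction — have no independent justification. You flag exactly this cluster of issues as the ``main obstacle,'' but for the proof to stand it must actually be overcome, and the sharp constant depends on it.

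The paper's argument sidesteps all of this by never attempting an explicit untelescoping. Instead it first proves $K$ is prime, then works with a \emph{bridge-thin} position $k$ (minimizing the pair $(b(k),w(k))$ lexicographically) and appeals to the Wu-type fact (Theorem~\ref{thinner1}) that a thinnest thin level of a bridge-thin position is essential, to Lemma~\ref{essdist} to force any c-essential surface of small complexity to be parallel to $S$, and to Lemma~\ref{tangdist3} plus Lemma~\ref{cthin} to dispose of the bridge-position case. The final identification of the two resulting thick spheres with $\Sigma_1$ and $\Sigma_2$ is done with Theorem~\ref{tangdist1} exactly as you envision; the difference is that the paper arrives at the ``two thick levels, one thin level $=P$'' configuration through thin-position and distance lemmas, whereas your sketch assumes it as the output of an unproved untelescoping step. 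If you want to salvage your outline, the natural fix is to replace ``untelescope relative to $P$'' with the bridge-thin position argument and the essential-thinnest-thin-level theorem, at which point it becomes the paper's proof.
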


Our paper is structured as follows. In Section \ref{prelim}, we introduce the notions of width and bridge number and we present relevant background. In Section \ref{bridge-surface}, we define distance of a bridge surface and give results connecting distance to the existence of essential surfaces and alternate bridge surfaces. In Section \ref{tangle-sum}, we prove Theorem \ref{main2}. In Section \ref{high-distance}, we give an embedding of a knot with a compressible thin level, while in Sections \ref{Sec:DefinitionK}, \ref{Sec:thinposition}, and \ref{final-case} we show that this embedding is a thin position for the knot. Our proof relies on recent advances in the study of high distance bridge surfaces~\cite{johntom} and the construction of our examples is inspired by the examples of strict subadditivity of width presented in~\cite{blairtom}.

\section{Preliminaries}\label{prelim}
A \emph{knot} $K$ is an isotopy class of embedded simple closed curves in the 3-sphere.  For the remainder of the paper, we fix a Morse function $h:S^3 \rightarrow \R$ such that $h$ has exactly two critical points, one of index zero and one of index three.  Given an embedded simple closed curve $k$ in $S^3$, we may perturb $k$ (if necessary) so that $h|_k$ is Morse.  Let $c_0,\dots,c_n$ denote the critical values of $h|_k$ and choose regular values $r_1,\dots,r_n$ satisfying $c_{i-1} < r_i < c_i$.  We say that $h^{-1}(r_i)$ is a \emph{level sphere} having \emph{width} $w(h^{-1}(r_i)) = |k \cap h^{-1}(r_i)|$.  The \emph{width} $w(k)$ and \emph{bridge number} $b(k)$ of $k$ are defined to be
\[ w(k) = \sum w(h^{-1}(r_i)) \, \, \text{ and } \, \, b(k) = \frac{n+1}{2},\]
respectively.  To obtain invariants of the knot $K$, we minimize these two quantities over all possible embeddings of $K$.  In other words, the \emph{width} $w(K)$ and \emph{bridge number} $b(K)$ of $K$ are defined by
\[ w(K) = \min_{k \sim K} w(k) \,\, \text{ and } \, \, b(K) = \min_{k \sim K} b(k).\]
We say that $k$ is a \emph{thin position} of $K$ if $w(k) = w(K)$.   More generally, we say $k$ is a \emph{bridge position} if all maxima of $h|_k$ occur above all minima of $h|_k$, and a bridge position $k$ is a \emph{minimal bridge position} if $b(k) = b(K)$. Equivalently, we could define these concepts by fixing an embedding $k$ of $K$ and considering isotopy classes of Morse functions $h$ on $K$.

Let $k$ and $r_1,\dots,r_n$ be as above.  In \cite{ss}, Scharlemann and Schultens present an alternate formula for calculating width using level spheres which intersect $k$ maximally or minimally.  For $1 < i < n$, we call $h^{-1}(r_i)$ a \emph{thick level} if $c_{i-1}$ is a minimum and $c_i$ is a maximum, or a \emph{thin level} if $c_{i-1}$ is a maximum and $c_i$ is a minimum.  It is a straightforward exercise to see that if $k$ has $m$ thick levels, then it has $m-1$ thin levels.  We let $a_1,\dots,a_m$ (resp. $b_1,\dots,b_{m-1})$ denote the widths of the thick levels (resp. thin levels), where both sets of numbers are naturally ordered by the height function $h$.  Hence, each $k$ gives rise to a tuple of even integers $(a_1,b_1,a_2,\dots,b_{m-1},a_m)$ which we call the \emph{thin-thick} tuple corresponding to $k$.  By \cite{ss},
\begin{equation}\label{alt}
w(k) = \frac{1}{2} \left(\sum a_i^2 - \sum b_i^2 \right).
\end{equation}
In particular, if we select one of the $a_i$'s, we have
\begin{equation}\label{ineq1}
w(k) \geq \frac{a_i^2}{2}.
\end{equation}

As demonstrated by Thompson \cite{thomps} and Wu \cite{wu}, thin position is related to essential surfaces embedded in the exterior $E(K) = S^3 \setminus \eta(K)$ of $K$, where $\eta( \cdot )$ denotes an open regular neighborhood.  A compact surface $S$ with nonempty boundary properly embedded in $E(K)$ is said to be \emph{meridional} if each curve of $\pd S$ bounds a meridian disk of the solid torus $\overline{\eta(K)}$.  As such, meridional surfaces are in one-to-one correspondence with closed surfaces embedded in $S^3$ and intersecting $K$ transversely.  We often blur the distinction between such surfaces; however, for a closed surface $S$ in $S^3$ intersecting $K$ transversely, we will use $S_K$ to denote $S \cap E(K)$ where appropriate.  In an abuse of terminology, we also occasionally refer to $S_K$ as a \emph{punctured} surface despite the fact that it is compact.  When we say that two closed surfaces $S$ and $T$ in $S^3$ which intersect $K$ transversely are isotopic, we will mean that the surfaces are isotopic relative to $K$ (equivalently, $S_K$ is isotopic to $T_K$ in $E(K)$) unless otherwise specified.

Suppose now that $S$ is a properly embedded meridional or closed surface in $E(K)$.  A \emph{compressing disk} $D$ for $S$ is an embedded disk such that $S \cap D = \pd D$ and $\pd D$ is an essential curve in $S$.  If there is an compressing disk for $S$, we say $S$ is \emph{compressible}; otherwise, $S$ is \emph{incompressible}.  If $S$ is isotopic into $\pd E(K)$, we say $S$ is \emph{$\pd$-parallel}, and in the case that $S$ is incompressible and not $\pd$-parallel, we call $S$ \emph{essential}.

There are two other classes of disks we will use, and to define these we consider $S$ as an embedded surface in $S^3$ intersecting $K$ transversely.  A \emph{bridge disk} is an embedded disk $\Delta$ such that $\pd \Delta$ is the endpoint union of arcs $\A$ and $\n$, where $\Delta \cap S = \A$ and $\n \subset K$.  A \emph{cut disk} $C$ is an embedded disk such that $C \cap S = \pd C$, $\pd C$ is essential in $S_K$, and $C \cap K$ is a single point in the interior of $C$.  If there is a compressing or cut disk (a \emph{c-disk}) for $S$, we way $S$ is \emph{c-compressible}; otherwise $S$ is \emph{c-incompressible}.  If $S$ is c-incompressible and not $\pd$-parallel, $S$ is \emph{c-essential}.

If a surface $S$ is c-compressible, then we may surger $S$ along a c-disk $D$ to get a new surface $S'$.  We note that this process has an inverse operation:  If $D$ is a compressing disk, we may recover $S$ from $S'$ by performing surgery on $S'$ along an arc $\A$ such that $\A \cap S' = \pd \A$; that is, viewing $\overline{\eta(\A)}$ as $\A \X \Delta$, where $\Delta$ is a small disk, we glue the annulus $\A \X \pd \Delta$ to $S' \setminus \eta(\pd\A)$.  If $D$ is a compressing disk, we recover $S$ using an arc $\A$ disjoint from $K$; when $D$ is a cut disk, we recover $S$ using an arc $\A$ contained in $K$.  In either case, we say that $S$ results from \emph{tubing} $S'$.

The relationship between thin position of a knot $K$ and essential meridional surfaces is made precise with the following theorem:

\begin{theorem}\cite{wu}\label{thinner}
Suppose $k$ is a thin position of $K$ which has a thin level.  Then, any thinnest thin level $h^{-1}(r_i)$ is an essential meridional surface in $E(K)$.
\end{theorem}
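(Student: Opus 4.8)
The plan is to prove the two halves of ``essential'' for $P := h^{-1}(r_i)$ separately, following the strategy of Wu \cite{wu} (itself a refinement of Thompson \cite{thomps}): that $P_K$ is not $\pd$-parallel in $E(K)$, and that $P_K$ is incompressible. Both are proved by contradiction, by producing from a hypothetical bad disk an embedding of $K$ of strictly smaller width, contradicting that $k$ is thin.

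\textbf{Not $\pd$-parallel.} Suppose $P_K$ is $\pd$-parallel. Since $K$ is connected, $\pd E(K)$ is a single torus, so $P_K$ is an annulus parallel into it and $P$ is a $2$-punctured sphere cutting off a trivial $1$-strand ball on one side, say the ball $B_-$ lying below $P$. The arc $k \cap B_-$ is then $\pd$-parallel in $B_-$, so I can isotope $k$ so that this arc becomes monotone with respect to $h$, by pushing it into a collar of $P$. This removes every maximum of $h|_k$ below $P$ --- in particular the maximum immediately below $P$, which exists because $P$ is a thin level --- so, tracking the change in the thin-thick tuple, formula (\ref{alt}) shows $w(k)$ strictly decreases; a contradiction. (This argument also lets us assume $w(P) \ge 4$ in what follows.)

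\textbf{Incompressible.} Suppose $D$ is a compressing disk for $P_K$. Since $P$ separates $S^3$ and $D \cap P = \pd D$, the disk $D$ lies entirely on one side of $P$; after relabelling, say $D \subset B_+$, the ball above $P$. Let $Q$ be the thick level immediately above $P$ and let $R \subset B_+$ be the compact region between $P$ and $Q$. Because $P$ is a thin level and $Q$ is the next thick level above it, the width function is nondecreasing from $P$ up to $Q$, so every critical point of $h|_k$ in $R$ is a minimum; hence $k \cap R$ consists of monotone arcs running from $P$ to $Q$, together with arcs having a single minimum and both endpoints on $Q$. The plan is first to isotope $D$ rel $\pd D$ so that it is contained in $R$ (and, after surgering against $Q$, disjoint from $Q$), using the standard innermost-disk and outermost-arc reductions on the intersections of $D$ with the level spheres $h^{-1}(t)$ for $t \ge r_i$; the simple structure of $k$ inside $R$ and above $Q$ is what forces each intersection circle and arc either to be removable by isotopy or to itself yield a width reduction. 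Once $D$ is localized in the collar $R$, it supplies an isotopy of $k$ that simplifies $h|_k$ across $P$ --- concretely, one of the two balls that $D$ cuts off along $\gamma := \pd D \subset P$ can be pushed through $P$ --- and recording the effect on the thin-thick tuple, (\ref{alt}) again gives an embedding $k'$ of $K$ with $w(k') < w(k)$, the desired contradiction. Minimality of $w(P)$ among the widths of thin levels is used along the way to rule out that the auxiliary spheres produced in the surgeries are themselves thinner thin levels that would obstruct these moves.

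\textbf{Main obstacle.} The substantive step is the reduction to $D \subset R$. A priori $D$ can run far above $Q$, and since thick levels need not be incompressible one cannot simply delete all circles of $D \cap Q$. The argument must exploit that $R$ contains only minima --- so that an innermost subdisk of $D$ lying below some regular level and meeting $P$ in an essential curve already yields a thinner position --- together with the minimality of $w(P)$, to force every intersection to be removable. Carrying out this bookkeeping precisely (essentially re-deriving the relevant lemma of \cite{wu}) is the heart of the proof; the concluding width computations via (\ref{alt}) are routine.
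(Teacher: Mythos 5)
The paper does not actually give a proof of Theorem \ref{thinner}: it is stated as a citation to Wu \cite{wu} and used as a black box. (The one-line ``proof'' supplied later for the close variant Theorem \ref{thinner1} is again only a pointer to Wu's argument.) So there is no internal proof to compare against, and the only question is whether your sketch would close the gap on its own.

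It would not, because the central step you propose is impossible. You want to isotope the compressing disk $D$ rel $\pd D$ into the region $R$ between the thin level $P$ and the adjacent thick level $Q$. But your own observation --- that every critical point of $h|_k$ inside $R$ is a minimum --- is precisely what rules this out: it shows that $R \setminus \eta(k)$ is a compression body whose negative boundary is $P_K$ (concretely, it is $P_K \times I$ with one trivial tunnel drilled near the $Q$-side for each minimum), and the negative boundary of a compression body is incompressible. Hence $P_K$ admits \emph{no} compressing disk contained in $R$, and no amount of surgering $D$ against $Q$ will change that. The escape hatch you mention in passing, that an irremovable circle ``itself yields a width reduction,'' is therefore not a side case but the entire content of the theorem, and it is left unexplained; this is also why the analogous statement is nontrivial in \cite{thomps}, \cite{wu}, \cite{tomova1}, and \cite{blairtom}, all of which have to work with a compressing disk that genuinely runs above $Q$ and extract a width reduction from that global picture. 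Your boundary-parallel case and the overall contradiction template (compressible thinnest thin level $\Rightarrow$ strictly thinner embedding via equation (\ref{alt})) are the right shape, but the incompressibility half needs to be rebuilt around an argument that never tries to confine $D$ to the collar $R$.
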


A knot $K$ in $S^3$ is \emph{prime} if its exterior contains no essential meridional annulus.  The above result has been strengthened by Tomova, and we will employ the following extension:

\begin{theorem}\cite{tomova1}\label{plustwo}
Suppose $K$ is a prime knot in thin position and $P$ is a thin sphere of minimal width.  If $P'$ is another thin sphere satisfying $w(P') = w(P) + 2$, then $P'$ is incompressible.
\end{theorem}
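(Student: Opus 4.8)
The plan is to argue by contradiction, refining the thinning argument behind Wu's Theorem \ref{thinner}. Suppose $P'$ is compressible in $E(K)$ and let $E$ be a compressing disk for $P'_K$; after an ambient isotopy of $k$ we may assume $E$ lies on one side of $P'$, say in $\{h \geq h(P')\}$. Let $Q$ be the first thick sphere above $P'$; since $P'$ is a thin level, the only critical points of $h|_k$ between $P'$ and $Q$ are minima, so the width function increases monotonically from $P'$ up to $Q$. A preliminary observation is that $E$ cannot be isotoped to lie below $Q$: if it could, then $E$ together with a subdisk of $P'$ bounded by $\partial E$ would cobound a ball in that region meeting $k$ only in arcs with both endpoints on $P'$, and any such arc would have a local maximum there, which is absurd. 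Thus any compression of $P'$ must interact with the part of $k$ lying above $Q$; this is precisely why one should not expect \emph{every} thin sphere of a thin position to be incompressible, and why a hypothesis bounding $w(P')$ is unavoidable.

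With this in hand, I would feed $E$ into the standard thinning machinery. First isotope $E$ so that $h|_E$ is Morse and $|E \cap h^{-1}(t)|$ is minimal over regular values $t > h(P')$, and then examine the pieces into which the corresponding level spheres cut $E$. Innermost-disk and outermost-arc reductions against these spheres produce, at each stage, either a disk along which some maximum or minimum of $h|_k$ can be pushed past a level sphere --- lowering the number of critical points of $h|_k$ while not increasing $w(k)$ --- or a bridge disk for some thick or thin sphere lying above $P'$. Carrying this out to completion realizes the compression of $P'$ by an honest isotopy of $k$ and produces an embedding $k'$ with $w(k') \leq w(k)$ in which a pair of strands has been stripped off $P'$ (leaving a thin sphere of width $w(P') - 2 = w(P)$), with a compensating change at one adjacent thick sphere and the remainder of the thin--thick tuple altered in a controlled way. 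If $w(k') < w(k)$ at any stage we contradict $w(k) = w(K)$, so we may assume $w(k') = w(k)$, which makes $k'$ again a thin position of $K$.

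The remaining, width-preserving case is the heart of the argument, and it is here that I expect the real work to lie. Using the Scharlemann--Schultens formula \eqref{alt}, one tracks the change in $w$ induced by the thinning moves: stripping a pair of strands off $P'$ forces the width of an adjacent thick sphere to drop by two, and the net change in $\tfrac{1}{2}\bigl(\sum a_i^2 - \sum b_i^2\bigr)$ is strictly negative unless that thick sphere is almost as narrow as $P'$. The hypotheses then do their work: because $w(P') = w(P) + 2$ with $w(P)$ already the minimal thin width, a thick sphere this narrow would force $k'$ --- and hence $k$ --- to admit either a further width-lowering move or a meridional sphere of width $2$, the latter impossible since $K$ is prime and the former contradicting $w(k) = w(K)$. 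So in fact $w$ must strictly drop, the desired contradiction, and $P'$ is incompressible. The genuine obstacle is to carry out this bookkeeping precisely when $E$ has to be dragged upward past several thick and thin spheres, and to confirm that $w(P') = w(P) + 2$ is exactly the threshold at which the inequality coming from \eqref{alt} closes --- consistent with the existence of thin positions that do carry a compressible thin level.
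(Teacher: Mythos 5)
First, note that the paper does not prove this statement: Theorem~\ref{plustwo} is imported from Tomova's paper \cite{tomova1}, so there is no internal proof to compare your sketch against. Evaluated on its own merits, the proposal has the right general shape but contains genuine gaps, which you yourself acknowledge in the final paragraph.

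Your preliminary observation is sound: the slab between $P'$ and the first thick sphere $Q$ above it contains only minima of $h|_k$, so a compressing disk contained in that slab would, together with a subdisk of $P'$, bound a ball whose intersection with $k$ consists of arcs with both endpoints on $P'$; each such arc needs a local maximum, which is impossible there. But from that point on, the argument is asserted rather than proved. The sentence ``carrying this out to completion realizes the compression of $P'$ by an honest isotopy of $k$ and produces an embedding $k'$ with $w(k') \leq w(k)$'' is essentially the entire content of the theorem. Innermost-disk and outermost-arc reductions of a compressing disk for a \emph{thin} sphere do not, in any standard way, yield strict upper or lower disks for thick spheres above $P'$: that machinery (Gabai, Wu) applies to disks interacting with a single thick level, and there is no reason a compression of $P'$ that threads through several thin and thick levels should decompose into width-nonincreasing moves. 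Indeed, the main theorem of this very paper exhibits thin positions with compressible thin levels, so the existence of a compressing disk for a thin sphere \emph{cannot} in general be turned into a width-reducing isotopy. This means the hypotheses on $w(P')$ and primeness must enter at the outset to rule out the bad configurations, not merely as a tiebreaker at the end of the bookkeeping, and you do not show where or how they do so.

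The width accounting itself is the second gap. You assert that stripping a pair of strands off $P'$ forces an adjacent thick level to narrow by two and that equation~\eqref{alt} then forces strict decrease ``unless that thick sphere is almost as narrow as $P'$,'' but neither claim is derived; the interaction of a compression with the Morse picture is exactly what makes the problem hard, since surgering $P'$ along $E$ produces two spheres whose widths sum to $w(P')$ and which need not be level spheres of any nearby embedding. Pinning down what happens to the thin--thick tuple under the resulting rearrangement, and verifying that $w(P')=w(P)+2$ is precisely the threshold at which a contradiction with minimality or primeness appears, is the substantive part of Tomova's argument, and it remains open in your proposal. As written, this is a plausible outline of an attack, not a proof.
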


We note, however, that the preceding theorems do not imply that thin spheres are $c$-incompressible.  In the case of a c-compressible thin sphere, we will be able to use a theorem of Blair and Tomova.

\begin{theorem}\cite{blairtom}\label{thinnest}
Let $K$ be a prime knot in thin position and suppose $P$ is a c-compressible thin sphere, with a c-disk $D$.  Then there is a thin sphere $P'$ adjacent to $P$ in the direction of $D$ such that either $D \cap P' \neq \emp$ or $w(P') < w(P)$.
\end{theorem}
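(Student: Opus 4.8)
The plan is to analyze the local structure of the knot $k$ near the c-compressible thin sphere $P = h^{-1}(r_i)$ and argue by contradiction: if every thin sphere $P'$ adjacent to $P$ in the direction of the c-disk $D$ both misses $D$ entirely after isotopy and satisfies $w(P') \geq w(P)$, then we can use $D$ to construct a width-reducing isotopy of $k$, contradicting that $k$ is in thin position. I would begin by recording the combinatorial setup of thin/thick levels between $P$ and the next thin sphere in the direction $D$ points. Say $D$ lies (after a small isotopy pushing it off the level spheres except along $\partial D$) in the region above $P$; let $P' = h^{-1}(r_j)$ be the first thin level above $P$, with thick levels $h^{-1}(r_k)$ for $i < k < j$ in between. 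The key point is that $D$ is a c-disk for $P$, i.e. either a compressing disk disjoint from $K$ or a cut disk meeting $K$ once, with $\partial D$ essential in $P_K$.

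Next I would push $D$ upward toward $P'$: either $D$ can be isotoped to lie entirely in the region between $P$ and $P'$ with $\partial D \subset P$, or $D$ must intersect $P'$. If $D \cap P' \neq \emptyset$, we are done (this is the first alternative in the conclusion), so assume $D$ lies in the slab between $P$ and $P'$. Now the strategy is the standard ``thinning'' move: use $D$ to guide an isotopy of $k$. Since $\partial D$ is essential in $P_K$ and $D$ is disjoint from $k$ except possibly one point, compressing (or cut-compressing) $P$ along $D$ produces a surface that can be reinterpreted as instructions for sliding the portion of $k$ that lies above $P$ and ``inside'' $\partial D$ downward past $P$, or pushing a maximum of $h|_k$ below the level of $P$. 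The careful bookkeeping here uses the Scharlemann--Schultens width formula~\eqref{alt}: moving an arc of $k$ across $P$ changes the thin--thick tuple in a controlled way, and because $w(P') \geq w(P)$ forces the intervening thick levels to be ``wide enough,'' the net effect on $w(k)$ is a strict decrease — unless the isotopy is obstructed by $k$ hitting $P'$, which is exactly the excluded case.

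The main obstacle I expect is making the isotopy argument precise in the cut-disk case and ensuring the isotopy genuinely lowers width rather than merely rearranging critical points; one must rule out the possibility that pushing $k$ across $P$ using $D$ creates new maxima or minima that offset the gain. This is handled by choosing $D$ to be ``innermost'' or ``of minimal complexity'' among c-disks for $P$ meeting the slab above $P$, so that no further intersections with level spheres can be removed, and then tracking how each critical point of $h|_k$ in the slab is affected. A secondary technical point is the hypothesis that $K$ is prime, which is used to exclude meridional annuli that could arise when $D$ is a cut disk and to guarantee that $\partial D$ remains essential after the relevant surgeries. I would organize the proof so that the contradiction is reached precisely when $D$ can be pushed off $P'$ \emph{and} $w(P') \geq w(P)$, yielding a width-reducing isotopy and completing the argument.
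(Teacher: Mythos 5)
This theorem is quoted with a citation to \cite{blairtom}; the present paper does not prove it, so there is no internal proof against which to compare your attempt. Judged on its own merits, your proposal correctly identifies the contrapositive to aim for (assume $D\cap P'=\emp$ and $w(P')\geq w(P)$, then produce a width-decreasing isotopy contradicting thin position) and the right bookkeeping tool (the thin--thick tuple formula \eqref{alt}), but it stops short of a proof exactly where the theorem has content.

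Concretely, you never construct the isotopy, never carry out the width calculation, and never say where the hypothesis $w(P')\geq w(P)$ is actually used; the sentence claiming that this inequality ``forces the intervening thick levels to be wide enough'' so the net change in width is negative is an assertion of the desired conclusion, not an argument. There is also a structural obstruction to the naive ``slide the inside of $\partial D$ below $P$'' move you sketch. Let $\Sigma$ be the thick level adjacent to $P$ in the direction of $D$. The product region between $P$ and $\Sigma$ contains only minima of $h|_k$, so no arc of $K$ there runs from $P$ back to $P$; consequently, a ball cobounded by $D$ and a disk $E\subset P$ would meet $K$ in at most one arc exiting through the cut point, forcing $E$ to have at most one puncture and $\partial D$ to be inessential in $P_K$. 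Thus $D$ cannot lie between $P$ and $\Sigma$ and must cross $\Sigma$, so whatever isotopy one performs has to negotiate the thick level rather than simply push a trapped sub-ball of $K$ past $P$. Finally, the role you guess for primeness (ruling out meridional annuli, keeping $\partial D$ essential after surgery) is plausible but again undemonstrated. In short, you have a reasonable research plan, but the missing steps are precisely the substance of the Blair--Tomova argument and are not routine to fill in.
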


\section{Bridge surfaces and distance}\label{bridge-surface}

In this section, we will define bridge surfaces for knots and tangles and discuss important relationships between bridge surfaces, essential surfaces, and distances between disk sets in the curve complex.

A \emph{punctured 3-sphere} is the complement of a disjoint embedded collection of open 3-balls in $S^3$.  We construct our knots by gluing together arcs embedded in punctured 3-spheres, called tangles.  A \emph{tangle} $(B,\tau)$ consists of a 3-manifold $B$ homeomorphic to a punctured 3-sphere containing a properly embedded 1-manifold $\tau$ which has no closed components.  In the special case that $B$ is a 3-ball and $n = |\tau|$, we call $(B,\tau)$ an \emph{$n$-strand tangle}.  Every embedded 2-sphere $\Sigma$ in a punctured 3-sphere $B$ separates $B$ into two punctured 3-balls $B_1$ and $B_2$.  If $B_i$ has $n \geq 2$ boundary components, then there is a collection of $n-2$ properly embedded arcs $\Gamma_i$, called the \emph{spine} of $B_i$, such that $B_i \setminus \eta(\Gamma_i)$ is homeomorphic to $\Sigma \times I$. If $B_i$ has one boundary component, then $\Gamma_{i}$ is a single point in the interior of $B_{i}$.

Given $(B,\tau)$ such that $(B,\tau)$ is a tangle or $B=S^3$ and $\tau$ is a knot, we say an embedded 2-sphere $\Sigma$ in $B$ is a \emph{bridge sphere} if $\Sigma$ is transverse to $\tau$ and if for each arc $\alpha$ of $\tau \cap B_i$ with both endpoints on $\Sigma$ there is a bridge disk whose boundary contains $\alpha$ (such arcs called \emph{$\pd$-parallel}), and every other component of $\tau \cap B_i$ is an arc isotopic to an I-fiber of $B_i \setminus \eta(\Gamma_i)$, $i=1,2$ (such arcs are called \emph{vertical}).  Letting $\tau_i = \tau \cap B_i$, we say that $(B_i,\tau_i)$ is a \emph{trivial tangle} and we express the decomposition of $(B,\tau)$ by $\Sigma$ as a \emph{bridge splitting},
\[ (B,\tau) = (B_1,\tau_1) \cup_{\Sigma} (B_2,\tau_2).\]

Here we make several other definitions concerning tangles:  A tangle $(B,\tau)$ is \emph{irreducible} if every 2-sphere embedded in $B \setminus \eta(\tau)$ bounds a ball, and $(B,\tau)$ is \emph{prime} if every meridional annulus in $B \setminus \eta(\tau)$ is boundary parallel.

Given an embedding of a knot in $S^3$ that is Morse with respect to the height function $h$, every thick level $\Sigma$ is a bridge sphere for the portion of the knot that lies between the thin level immediately below $\Sigma$ and the thin level immediately above $\Sigma$. If $(B,\tau)$ is a knot in $S^3$ or a tangle in a 3-ball, we say a bridge sphere $\Sigma$ is an \emph{$n$-bridge sphere} if $(B_1,\tau_1)$ is an $n$-strand tangle. It is a simple exercise to verify that if $k$ is an embedding of a knot $K$ with one thick level $h^{-1}(r)$, then $h^{-1}(r)$ is an $n$-bridge sphere, where $n = w(h^{-1}(r))/2$.  As above, if $S$ is an embedded surface in $B$ transverse to $\tau$, we will use $S_{\tau}$ to denote $S \setminus \eta(\tau) \subset B \setminus \eta(\tau)$ when we wish to refer to a surface in the exterior of $\tau$ in $B$.

We will utilize ``sufficiently complicated" bridge surfaces, where complication is measured via distances between disk sets in the curve complex.  For a compact surface $S$, the (1-skeleton of the) curve complex $\mathcal{C}(S)$ is a graph whose vertices are isotopy classes of essential simple closed curves.  Two vertices are connected by an edge if their corresponding curves may be realized disjointly.  The vertex set of the curve complex $\mathcal{C}(S)$ has a natural metric constructed by assigning each edge length one and defining the distance between two vertices to be the length of the shortest path between them.

Given a bridge sphere $\Sigma$ splitting $(B,\tau)$ into $(B_1,\tau_1) \cup_{\Sigma} (B_2,\tau_2)$, we define the disk set $\mathcal{D}_i \subset \mathcal{C}(\Sigma_{\tau})$ to be those curves in $\Sigma_{\tau}$ which bound compressing disks in $B_i \setminus \eta(\tau_i)$, and the distance $d(\Sigma)$ of the splitting to be
\[ d(\Sigma) = \min\{ d(c_1,c_2): c_i \in \mathcal{D}_i\}.\]
The utility of this definition is made clear by the following two theorems of Johnson and Tomova (the first is a version of Theorem 4.4 from \cite{johntom} and the second is implicit in the proof of Theorem 4.2):

\begin{theorem}\cite{johntom}\label{tangdist1}
Suppose $(N,K)$ is a tangle or $N = S^3$ and $K$ is a knot, and let $(M,T)$ be a tangle inside of $(N,K)$ such that $K$ is transverse to $\pd M$ and $(\pd M)_K$ is incompressible in $(\overline{N\setminus M}) \setminus \eta(K)$, and $T = K \cap M$.  In addition, let $\Sigma$ be a bridge sphere for $(M,T)$ and let $\Sigma'$ be a bridge sphere for $(N,K)$.  Then one of the following holds:
{\bi
\item After isotopy, some number of c-compressions of $\Sigma_K' \cap M$ in $M$ yield a compressed surface $\Sigma''_K$ such that $\Sigma_K'' \cap M$ is parallel to $\Sigma_K$.
\item $d(\Sigma) \leq 2 - \chi(\Sigma'_K)$.
\item $\chi(\Sigma_K) \geq -3$.
\ei}
\end{theorem}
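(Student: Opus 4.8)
The plan is to compare $\Sigma$ and $\Sigma'$ by a sweepout argument of Rubinstein--Scharlemann type, adapted to bridge surfaces as in \cite{johntom}. Write the bridge splitting as $(M,T)=(M_1,\tau_1)\cup_\Sigma(M_2,\tau_2)$ with spines $\Gamma_1,\Gamma_2$, so that $M_i\setminus\eta(\Gamma_i)\cong\Sigma\times I$; gluing these product structures yields a sweepout $\{\Sigma_t\}_{t\in[0,1]}$ of $M$ with $\Sigma_{1/2}=\Sigma$, where $\Sigma_t$ collapses onto $\Gamma_1$ as $t\to 0$ and onto $\Gamma_2$ as $t\to 1$, and --- since $\Sigma$ is a bridge sphere --- we may arrange $T$ so that $|\Sigma_t\cap T|$ is constant for $t\in(0,1)$. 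Separately, I would normalize $F:=\Sigma'_K\cap M$: first minimize $|F\cap(\partial M)_K|$ by isotopies of $\Sigma'$, using incompressibility of $(\partial M)_K$ in $\overline{N\setminus M}\setminus\eta(K)$ to discard inessential intersection curves and to rule out boundary compressions reducing complexity; then put $F$ in general position with respect to the sweepout and record its maximal $c$-compression $P$ inside $M\setminus\eta(T)$, which is $c$-incompressible. Finally I would isotope everything (inside $M$) to minimize $|P\cap\Sigma|$, and lexicographically a finer complexity, among all such positions.

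The heart of the argument is the Rubinstein--Scharlemann graphic $G\subset[0,1]^2$ for the sweepout $\{\Sigma_t\}$ together with a Morse function on $P$ placed in general position with it. One tags a complementary region of $G$ ``upper'' when the corresponding level $\Sigma_t$ admits a compressing disk for $\Sigma_K$ on the $M_2$-side disjoint from $P$ at the relevant height, and ``lower'' in the symmetric situation on the $M_1$-side. The standard trichotomy then applies: either (i) some region carries both tags (a spanning configuration), or (ii) no region does, or (iii) the graphic is so degenerate that $-\chi(P)$, and hence $-\chi(\Sigma_K)$, is too small.

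In case (i), a spanning region yields, for a single parameter $t$, a compressing disk for $\Sigma_K$ on each side of $\Sigma$, each disjoint from one level-curve family of $P$; pushing these disks across $P$ level by level produces a path in $\mathcal{C}(\Sigma_K)$ from a curve of $\mathcal{D}_1$ to a curve of $\mathcal{D}_2$ whose length is controlled by the number of critical levels of $P$, and an Euler-characteristic count --- $c$-compressions do not decrease $\chi$, and the part of $\Sigma'_K$ outside $M$ contributes non-positively by incompressibility of $(\partial M)_K$ --- bounds the length by $2-\chi(\Sigma'_K)$, which is conclusion 2. In case (ii), the usual argument puts $\Sigma'_K\cap M$ in a position from which, combined with the $c$-incompressibility of $P$ and the triviality of $(M_1,\tau_1)$ and $(M_2,\tau_2)$, a sequence of $c$-compressions carries it to a surface parallel to $\Sigma_K$, which is conclusion 1. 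Case (iii) is precisely conclusion 3.

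The step I expect to be the main obstacle is the interaction with $\partial M$ together with the bookkeeping behind the bound $2-\chi(\Sigma'_K)$. Because the theorem permits only modifications of $\Sigma'$ inside $M$, every isotopy and every disk in the argument must be performed relative to the fixed incompressible surface $(\partial M)_K$, and the passage from ``complexity of $P$'' to ``$2-\chi(\Sigma'_K)$'' must track how restriction to $M$, minimization of boundary intersections, and maximal $c$-compression alter Euler characteristic and connectivity. Equally delicate is making the spanning-versus-parallelism dichotomy airtight --- in particular eliminating the intermediate configurations in which $P\cap\Sigma$ is neither empty nor product-forming --- which is where the genuine content is spent, in the main case $\chi(\Sigma_K)\le -4$. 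For these details one follows Section~4 of \cite{johntom}.
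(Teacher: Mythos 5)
The paper does not prove this statement: it is quoted as ``a version of Theorem 4.4 from \cite{johntom}'' and is used as a black box, so there is no in-paper argument against which to check your proposal. Your sketch is a reasonable high-level account of the Rubinstein--Scharlemann sweepout-and-graphic strategy that Johnson and Tomova use in Section~4 of \cite{johntom} --- normalizing $\Sigma'_K\cap M$ relative to the incompressible surface $(\partial M)_K$, comparing against the sweepout of $M$ determined by $\Sigma$, labeling complementary regions of the graphic, and reading off the three alternatives (spanning gives the distance bound $d(\Sigma)\le 2-\chi(\Sigma'_K)$, a non-spanning graphic gives parallelism to $\Sigma_K$ after $c$-compressions, and a degenerate graphic forces $\chi(\Sigma_K)\ge -3$) --- so it takes essentially the same route as the cited source, with the technical details appropriately deferred to that paper.
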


\begin{theorem}\cite{johntom}\label{tangdist2}
Suppose $(N,K)$ is a tangle or $N = S^3$ and $K$ is a knot, and let $(M,T)$ be a tangle inside of $(N,K)$ such that $K$ is transverse to $\pd M$ and $(\pd M)_K$ is incompressible in $(\overline{N\setminus M}) \setminus \eta(K)$, and $T = K \cap M$.  Let $\Sigma$ be a bridge sphere for $(M,T)$ and $F$ be a surface properly embedded in $N$ such that $F$ is meridional, incompressible and separating. Additionally, suppose $F \cap (\pd M)_K$ is essential in $(\pd M)_K$. Then one of the following holds:
{\bi
\item $d(\Sigma) \leq 2 - \chi(F_K)$.
\item $\chi(\Sigma_K) \geq -3$.
\item Each component of $F \cap (M \setminus \eta(T))$ is boundary parallel in $M \setminus \eta(T)$.
\ei}
\end{theorem}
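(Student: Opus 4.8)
Theorem \ref{tangdist2} is the essential-surface counterpart of Theorem \ref{tangdist1}, and the plan is to run the same argument with the incompressible meridional surface $F$ in the role of the bridge surface $\Sigma'$; the one structural change is that, since $F$ admits no compressing disk, the alternative ``$\Sigma'$ $c$-compresses in $M$ to a copy of $\Sigma$'' is replaced by the alternative that $F\cap M$ is $\partial$-parallel. First I would isotope $F$ in $N$ so that $|F\cap\Sigma|$ is minimal among surfaces isotopic to $F$ with $F\cap(\partial M)_K$ essential in $(\partial M)_K$. Writing the bridge splitting as $(M,T)=(M_1,T_1)\cup_\Sigma(M_2,T_2)$, both sides are trivial tangles, so $H_i:=M_i\setminus\eta(T_i)$ is a handlebody and $M\setminus\eta(T)=H_1\cup_{\Sigma_T}H_2$ is irreducible (where $\Sigma_T=\Sigma\setminus\eta(T)=\Sigma_K$). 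Irreducibility together with the incompressibility of $F$ lets me assume, after standard innermost-disk exchanges, that every circle of $F\cap\Sigma$ is essential in both $F_K$ and $\Sigma_T$, hence is a vertex of $\mathcal{C}(\Sigma_T)$. If $F\cap\Sigma=\emptyset$ then $F\cap M\subset H_i$ for some $i$; an incompressible surface properly embedded in the handlebody $H_i$ with boundary the essential curves $F\cap(\partial M)_K$ (which cannot be compressed out of $M$, since $(\partial M)_K$ is incompressible in $\overline{N\setminus M}\setminus\eta(K)$) must be $\partial$-parallel, which is conclusion (3). So I assume $F\cap\Sigma\neq\emptyset$.

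Next, the spines $\Gamma_1,\Gamma_2$ together with $\Sigma$ determine a singular sweepout of $M$ --- a height function on $M\setminus\eta(\Gamma_1\cup\Gamma_2)$ with regular fibers copies of $\Sigma$ and singular fibers $\Gamma_1,\Gamma_2$. Put $F$ in Morse position with respect to this function and, following the width-minimization machinery of \cite{johntom} (in the Hayashi--Shimokawa and Tomova tradition), isotope $F$ to minimize its width. Examining $F_i:=F\cap M_i\subset H_i$: whenever $F_i$ admits a compressing disk in $H_i$, or an innermost $\partial$-compressing disk with boundary arc on $\Sigma_T$, width-minimality forces the associated surgery either to strictly simplify $F$ or to push a curve of $F\cap\Sigma$ across $\Gamma_i$. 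Iterating --- the process halts by an Euler-characteristic bound on $F_K$ --- leaves one of two states. Either $F$ has been isotoped off both $\Gamma_1$ and $\Gamma_2$; or, on each side $i$, there survive a curve $c_i\subset F\cap\Sigma$ and an essential compressing disk $D_i$ of the handlebody $H_i$ (built from $F_i$) with $\partial D_i$ joined to $c_i$ by a path in $\mathcal{C}(\Sigma_T)$ of length controlled by $-\chi(F_i)$.

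In the first state, $F\cap M$ lies in the product region $(M\setminus\eta(T))\setminus\eta(\Gamma_1\cup\Gamma_2)\cong\Sigma_T\times I$, hence is a union of vertical annuli and $\partial$-parallel pieces, giving conclusion (3). In the second state, $\partial D_i\in\mathcal{D}_i$, and concatenating the path from $\partial D_1$ to $c_1$, a path from $c_1$ to $c_2$ running along $F$ through curves of $F\cap\Sigma$ (of length at most the number of such curves, and hence at most $-\chi(F_K)$), and the path from $c_2$ to $\partial D_2$ gives a path from $\mathcal{D}_1$ to $\mathcal{D}_2$. Bookkeeping with $\chi(F\cap M)=\chi(F_1)+\chi(F_2)$ and with $\chi(F_K)\le\chi(F\cap M)$ --- valid because no component of $F\cap\overline{N\setminus M}$ is a disk, as $F\cap(\partial M)_K$ is essential --- bounds the total length by $2-\chi(F_K)$, which is conclusion (1). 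When the surfaces are too small for these curve-complex arguments to run, which happens exactly when $\Sigma_T$ is a sphere with at most five punctures, i.e. $\chi(\Sigma_K)\ge -3$, we land in conclusion (2).

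The main obstacle is making the second state of the second step sharp: one must arrange the iterated compressions and $\partial$-compressions to be mutually disjoint and consistently ordered so that the curves they produce really do form an edge-path in $\mathcal{C}(\Sigma_T)$, and then track the bookkeeping so the lengths total exactly $2-\chi(F_K)$ and not merely something bounded in terms of $\chi(F_K)$. Further care is needed for the peripheral curves of $F\cap\Sigma$ (meridians of the punctures of $\Sigma$), which force cut disks to be carried along with compressing disks, and for the enumeration of the small-complexity configurations underlying the escape clause $\chi(\Sigma_K)\ge -3$. Because this is the same machinery that proves Theorem \ref{tangdist1}, in practice one cites that argument and indicates only the single modification that incompressibility of $F$ forces the $\partial$-parallel alternative in place of the ``compresses to a copy of $\Sigma$'' one.
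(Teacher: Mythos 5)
The paper does not actually prove this theorem; it is cited as ``implicit in the proof of Theorem 4.2'' of \cite{johntom}, so there is no in-paper argument to compare your sketch against. That said, your reconstruction is a reasonable account of the Johnson--Tomova machinery: sweep out $M$ by the spines of the two trivial sides of the bridge splitting, put $F$ in thin position with respect to the sweepout, and read off an edge-path in $\mathcal{C}(\Sigma_K)$ from the compressions and $\partial$-compressions encountered while thinning, with the escape clause $\chi(\Sigma_K)\ge -3$ absorbing the low-complexity configurations where the curve-complex argument degenerates, and with the $\partial$-parallel alternative replacing the ``$c$-compresses to a copy of $\Sigma$'' alternative because $F$ is assumed incompressible. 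Two imprecisions are worth flagging. First, the two sides $H_1,H_2$ are not both handlebodies in general: when $M$ is a punctured ball, or when $\Sigma$ bounds a ball on only one side inside $M$, one of the $H_i$ is a compression body with negative boundary coming from $(\partial M)_K$, and an incompressible surface with essential boundary in a compression body need not be $\partial$-parallel. This is precisely where the hypotheses that $F\cap(\partial M)_K$ is essential in $(\partial M)_K$ and that $(\partial M)_K$ is incompressible on the outside do real work, and your sketch gestures at this without closing it. Second, the length bound is not simply ``at most the number of circles of $F\cap\Sigma$''; the standard bookkeeping charges each curve-complex edge to a decrement of Euler characteristic across a nested sequence of subsurfaces of $F$ in the sweepout, and peripheral circles of $F\cap\Sigma$ force cut disks into the sequence. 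You correctly identify both of these as the places requiring care and defer them to the cited paper, which is consistent with the paper's own treatment.
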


As an immediate application of Theorem \ref{tangdist2}, we prove the next two lemmas for use later in the paper.
\begin{lemma}\label{prime}
Suppose $(B,\tau)$ is a tangle containing a bridge sphere $\Sigma$.  Then one of the following holds:
{\bi
\item $d(\Sigma) \leq 2$.
\item $\chi(\Sigma_{\tau}) \geq -3$.
\item $(B,\tau)$ is prime.
\ei}
\begin{proof}
Suppose that the second two conclusions do not hold, so that $(B,\tau)$ contains an essential meridional annulus $A$.  By Theorem \ref{tangdist2}, it follows that $d(\Sigma) \leq 2- \chi(A) \leq 2$.
\end{proof}
\end{lemma}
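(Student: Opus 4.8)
The plan is to prove the dichotomy by assuming the first two conclusions fail and deducing the third. So I would suppose $d(\Sigma) > 2$ and $\chi(\Sigma_\tau) < -3$, and argue that $(B,\tau)$ must then be prime. If it were not prime, then $B \setminus \eta(\tau)$ contains a meridional annulus that is not boundary parallel; after the standard compressions and isotopies I would arrange that this annulus $A$ is in fact incompressible and separating (a compression of a meridional annulus produces meridional disks, and exchanging or pushing these either simplifies $A$ or exhibits a reduction of the tangle, the latter being excluded here). This realizes the ``not prime'' hypothesis as the existence of an essential meridional annulus $A$, which is the form of input Theorem \ref{tangdist2} wants.

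Having such an $A$, the idea is to apply Theorem \ref{tangdist2} with both the ambient pair $(N,K)$ and the sub-tangle $(M,T)$ taken to be $(B,\tau)$ and with $F = A$. In this degenerate case $\overline{N \setminus M}$ is empty, so the requirement that $(\pd M)_K$ be incompressible in it is vacuous; $\Sigma$ is a bridge sphere for $(M,T) = (B,\tau)$ by hypothesis; and since $\pd A \subset \pd\eta(\tau)$ we have $A \cap (\pd M)_K = \emptyset$, which is (vacuously) essential in $(\pd M)_K$. The three alternatives of Theorem \ref{tangdist2} then specialize to: $d(\Sigma) \le 2 - \chi(A) = 2$; or $\chi(\Sigma_\tau) \ge -3$; or every component of $A = A \cap (B \setminus \eta(\tau))$ is boundary parallel in $B \setminus \eta(\tau)$. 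The middle alternative is excluded by our standing assumption and the last contradicts the choice of $A$, so $d(\Sigma) \le 2$, contradicting $d(\Sigma) > 2$. Hence $(B,\tau)$ is prime, and the lemma follows.

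Since all of the real content is carried by Theorem \ref{tangdist2}, I expect the only genuine points needing care to be bookkeeping ones: first, whether to invoke the theorem in the degenerate range $M = N$ (one can, but if one prefers to avoid it, replace $M$ by the complement of a small collar of $\pd B$, which is a homeomorphic copy of $(B,\tau)$ containing $A$); and second, the passage from the stated definition of primeness, that every meridional annulus is boundary parallel, to the existence of an incompressible, separating, non-$\pd$-parallel meridional annulus. This last step is the main, though still routine, obstacle; everything else is a direct substitution into Theorem \ref{tangdist2}.
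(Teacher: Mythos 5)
Your proposal is correct and follows the same route as the paper: negate the first two alternatives, extract an essential meridional annulus $A$ from the failure of primeness, and feed $A$ into Theorem \ref{tangdist2} with $M = N = B$ to conclude $d(\Sigma) \le 2 - \chi(A) = 2$. The extra care you take (arranging $A$ to be incompressible and separating, and handling the degenerate $M = N$ case) addresses details the paper's one-line proof leaves implicit, but the argument is the same.
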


\begin{lemma}\label{essdist}
Suppose $K$ is a prime knot in $S^3$, and $(S^3,K)$ contains a tangle $(B,\tau)$ with bridge sphere $\Sigma$.  Suppose $F$ is a closed, separating surface in $S^3$ intersecting $K$ transversely such that $F_K$ is incompressible in $E(K)$. Additionally, suppose that $(\pd B)_{K}$ is incompressible in the knot complement. Then one of the following holds:
{\bi
\item $d(\Sigma) \leq 2 - \chi(F_K)$.
\item $\chi(\Sigma_K) \geq -3$.
\item After some number of cut compressions and an isotopy of $F$, we have $F \cap B = \emp$.  If, in addition, $F$ is isotopic into $B$, then $F_K$ is parallel to a component of $(\pd B)_K$.
\ei}
\begin{proof}
Since $(\pd B)_{K}$ and $F_K$ are incompressible meridional surfaces and $E(K)$ is irreducible, we can assume after isotopy that $(\pd B)_{K}\cap F_K$ is a collection of curves that do not bound disks in either surface. Further, since $E(K)$ is prime, we may remove curves of intersection of $(\pd B)_K \cap F_K$ which are boundary parallel in $(\pd B)_K$ by either cut compressions or isotopy of $F_K$, after which we may assume that $(\pd B)_K \cap F_K$ consists of curves essential in $(\pd B)_K$.

By Theorem \ref{tangdist2}, if the first two conclusions do not hold, then each component $R_i$ of $F \cap (B \setminus \eta(\tau))$ is parallel to a subsurface $R_i'$ of $S = \pd(B \setminus \eta(\tau))$.  Let $\{R_i'\}$ denote the collection of all such subsurfaces, and isotope $F$ so that the total number of components of intersections of surfaces $R_i' \in \{R_i'\}$ with $\pd \eta(\tau)$ is minimal up to cut compressions and isotopy of $F_K$.  Now, pick an index $i$ so that the corresponding $R_i'$ is innermost in $S$.  If there is an arc $t \in \tau$ such that $\pd \eta(t) \subset R_i'$, then a meridian curve $\gamma'$ of $\eta(t)$ lifts to curve $\gamma \subset R_i$.  In $B$, the curve $\gamma$ bounds a disk $C$ which intersects $\tau$ in exactly one point, so that either $C$ is a cut disk, or we could remove the intersection of $R_i'$ and $\pd \eta(t)$ via an isotopy.  In either case, we reduce the number of intersections of $\{R_i'\}$ with $\pd \eta(\tau)$.

We conclude that up to cut-compressions and isotopy of $F_K$, $R_i' \cap \eta(\tau) =\emp$ for all $R_i'$, so every component $R_i$ of the cut-compressed $F$ intersected with $B \setminus \eta(\tau)$ is parallel into $\pd B_{\tau}$.  Thus, we may push all such $R_i$ outside of $B$.  If, in addition, $F$ is isotopic into $B$, then $\{R'_i\}$ contains exactly one region $R_i'$, and $R_i'$ can be chosen so that $\pd R_i'$ avoids both $\text{int}(\pd(\eta(\tau)))$ and $\text{int}((\pd B)_K)$.  Thus, $R_i'$ is a component of $(\pd B)_K$, as desired.
\end{proof}
\end{lemma}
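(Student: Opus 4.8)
The plan is to apply Theorem \ref{tangdist2} with $(N,K)=(S^3,K)$ and $(M,T)=(B,\tau)$, taking $F$ as the meridional surface, and then to promote the third alternative of that theorem to the third alternative of this lemma by an innermost-disk argument. Throughout, write $B_{\tau}=B\setminus\eta(\tau)$ and let $S=\pd B_{\tau}$ be the union of the punctured sphere $(\pd B)_K$ with the frontier annuli $\pd\eta(\tau)$ of $\eta(\tau)$ in $B$; also note that a cut compression does not change $\chi(F_K)$, so we may freely replace $F$ by a cut compression of it. I would organize the proof as follows: (i) arrange $(\pd B)_K\cap F_K$ to be essential in $(\pd B)_K$ so that Theorem \ref{tangdist2} applies; (ii) invoke the theorem; (iii) assuming its first two conclusions fail, push $F$ entirely off of $B$; and (iv) deal with the additional hypothesis that $F$ is isotopic into $B$.

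For step (i): since $K$ is prime, $E(K)$ is irreducible, so, using incompressibility of $(\pd B)_K$ and $F_K$, I would first isotope so that $(\pd B)_K\cap F_K$ contains no curve bounding a disk in either surface. A remaining curve of intersection could be essential in $F_K$ yet parallel to a puncture of $(\pd B)_K$; taking such a curve $\gamma$ innermost in $(\pd B)_K$, it cuts off a once-punctured disk $D'$ meeting $F_K$ only along $\gamma$. On the $F_K$ side, $\gamma$ either bounds a once-punctured disk $D''$, so that $D'\cup D''$ is a $2$-sphere meeting $K$ twice which primeness of $K$ allows us to isotope $F$ across, removing $\gamma$; or it does not, in which case $D'$ is a cut disk for $F$ and we cut-compress along it. Iterating leaves $(\pd B)_K\cap F_K$ essential in $(\pd B)_K$.

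For steps (ii)--(iii): by Theorem \ref{tangdist2}, if neither $d(\Sigma)\le 2-\chi(F_K)$ nor $\chi(\Sigma_K)\ge -3$ holds, then each component $R_i$ of $F\cap B_{\tau}$ is parallel in $B_{\tau}$ to a subsurface $R_i'$ of $S$. Among all surfaces obtained from $F$ by cut compressions and isotopies of $F_K$, I would fix one minimizing the total number of intersection curves of the subsurfaces $R_i'$ with $\pd\eta(\tau)$. If this number were positive, pick $R_i'$ innermost in $S$; the aim is to show that $R_i'$ must then contain an entire frontier annulus $\pd\eta(t)$ for some arc $t\subset\tau$, so that a meridian of $\eta(t)$ lying on $R_i'$ lifts, across the product region realizing the parallelism, to a curve $\gamma\subset R_i\subset F$ bounding a disk $C$ in $B$ that meets $\tau$ in exactly one point. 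Either $C$ is a cut disk for $F$, and cut-compressing along it lowers the count, or $\gamma$ is inessential in $F_K$ and an isotopy of $F_K$ lowers it -- either way contradicting minimality. Hence no $R_i'$ meets $\pd\eta(\tau)$, each $R_i$ is parallel into $(\pd B)_K$ through a product region missing $\eta(\tau)$, and pushing the $R_i$ across these regions yields $F\cap B=\emp$.

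For step (iv): if the resulting $F$ is moreover isotopic into $B$, then it is a closed incompressible surface in $B_{\tau}$ each of whose components is parallel into $S$, yet it has just been pushed off of $B$; this forces the collection $\{R_i'\}$ to consist of a single subsurface, which one may take with $\pd R_i'$ disjoint from the interiors of both $(\pd B)_K$ and $\pd\eta(\tau)$. Such a subsurface is a single component of $(\pd B)_K$, and $F_K$ is parallel to it, as claimed. I expect step (iii) to be the main obstacle: making the minimality and innermost bookkeeping precise -- in particular, checking that cut-compressing along the disks $C$ genuinely decreases the chosen complexity without recreating inessential curves of $(\pd B)_K\cap F_K$, and that an innermost $R_i'$ meeting $\pd\eta(\tau)$ is forced to engulf a whole frontier annulus. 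Steps (i) and (iv) should be comparatively routine.
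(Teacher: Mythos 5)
Your proposal follows the paper's proof of Lemma \ref{essdist} essentially step for step: use primeness to make $(\pd B)_K \cap F_K$ essential in $(\pd B)_K$, invoke Theorem \ref{tangdist2}, run the minimality/innermost argument on the subsurfaces $R_i'$ to clear them off $\pd\eta(\tau)$ and then push $F$ out of $B$, and finish by identifying the single $R_i'$ with a component of $(\pd B)_K$ when $F$ is isotopic into $B$. The bookkeeping worry you raise in step (iii) -- that an innermost $R_i'$ meeting $\pd\eta(\tau)$ should engulf an entire frontier annulus $\pd\eta(t)$ -- is a real subtlety, but the paper's own argument is just as terse there (it only treats the case $\pd\eta(t)\subset R_i'$), so your proposal is not missing a step the authors supply.
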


Lastly, using results of Blair-Tomova-Yoshizawa \cite{bty}, it is shown in \cite{blairtom} that high distance tangles exist:

\begin{theorem}
Given positive integers $b,D,n$ with $n<b$ and $b \geq 3$, there exists an $n$-strand tangle $(B,\tau)$ with a bridge sphere $\Sigma$ such that $|\tau \cap \Sigma|  = 2b$ and $d(\Sigma) \geq D$.
\end{theorem}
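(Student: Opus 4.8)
The plan is to produce the required tangle by a Hempel-style twisting construction, in the spirit of \cite{bty}. First I would fix a concrete ``model'' splitting. Take a $2$-sphere $\Sigma$ meeting a tangle transversely in $2b$ points, with $(B_1,\tau_1)$ a trivial $b$-strand tangle on one side, and on the other side a product region $B_2 \cong \Sigma \X I$ containing $2n$ vertical arcs together with $b-n$ arcs that are $\pd$-parallel into $\Sigma$. The inequality $n<b$ enters here: it guarantees $b-n \geq 1$, which (as explained in the last step) is exactly what is needed for the disk set of the product side to be nonempty, and one can arrange the identification of the two sides so that the arcs glue up into precisely $n$ arcs with no closed components. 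The resulting $n$-strand tangle $(B,\tau^0) = (B_1,\tau_1) \cup_\Sigma (B_2,\tau_2)$ then has $\Sigma$ as a $b$-bridge sphere with $|\tau^0 \cap \Sigma| = 2b$, although $d(\Sigma)$ for this model is small.

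Next I would twist. Let $X$ denote the $2b$-punctured sphere $\Sigma_{\tau^0}$; since $b \geq 3$ it has at least six punctures, so $\mathcal{C}(X)$ is connected, of infinite diameter, and Gromov hyperbolic, and every pseudo-Anosov mapping class of $X$ acts on $\mathcal{C}(X)$ as a loxodromic isometry with positive stable translation length. Fix a pseudo-Anosov $\varphi$ of $X$ that fixes each puncture (such ``pure'' pseudo-Anosovs exist, for instance as a suitable power of any pseudo-Anosov). For each integer $k \geq 1$, reglue the two sides by $\varphi^k$, obtaining $(B,\tau^k) = (B_1,\tau_1) \cup_{\varphi^k} (B_2,\tau_2)$. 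Because the arcs in $B_1$ and $B_2$ are untouched and $\varphi^k$ fixes each puncture, $(B,\tau^k)$ is again an $n$-strand tangle with $b$-bridge sphere $\Sigma$ satisfying $|\tau^k \cap \Sigma| = 2b$; only the gluing map, and hence the isotopy class of the pair, has changed.

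Finally I would estimate the distance. The disk sets $\mathcal{D}_1, \mathcal{D}_2 \subset \mathcal{C}(X)$ of the two trivial tangles are intrinsic to $(B_i,\tau_i)$, hence independent of $k$; they are nonempty (this is where $b-n \geq 1$ is used — a $\pd$-parallel arc in $B_2$ supplies a compressing disk for $\Sigma_\tau$) and, by the standard innermost-disk surgery argument, have diameter at most $2$ in $\mathcal{C}(X)$. Regluing by $\varphi^k$ identifies $\mathcal{D}_2$ with $\varphi^k(\mathcal{D}_2) \subset \mathcal{C}(X)$, so that $d(\Sigma) = d_{\mathcal{C}(X)}\!\bigl(\mathcal{D}_1, \varphi^k(\mathcal{D}_2)\bigr)$. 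Since $\varphi$ is loxodromic on the $\delta$-hyperbolic space $\mathcal{C}(X)$ with translation length $\ell > 0$, one gets $d_{\mathcal{C}(X)}\bigl(\mathcal{D}_1, \varphi^k(\mathcal{D}_2)\bigr) \geq \ell k - C$, where $C$ depends only on $\delta$ and $\mathrm{diam}\,\mathcal{D}_1 + \mathrm{diam}\,\mathcal{D}_2$; choosing $k$ with $\ell k - C \geq D$ completes the argument.

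The hard part is the last step: producing a mapping class whose powers drive the two disk sets arbitrarily far apart in $\mathcal{C}(X)$ while preserving the entire combinatorial structure of the splitting. This is precisely the sweepout/curve-complex machinery of Blair--Tomova--Yoshizawa, and the restriction $b \geq 3$ is there to keep $\mathcal{C}(X)$ in the range where it is the ``standard'' infinite-diameter hyperbolic curve complex on which pseudo-Anosovs act loxodromically. The remaining points — the diameter-$2$ bound on the disk sets, and the claim that fixing the punctures rules out closed components after regluing — are routine.
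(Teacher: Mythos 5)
The paper does not actually prove this theorem; it cites it as established in \cite{blairtom} using the machinery of \cite{bty}, and remarks that it also follows from \cite{JM}, which is the route the paper itself takes in Section 5 to produce the specific tangle $(B_3,\tau_3)$. So there is no in-paper proof to compare your argument against. Your model splitting and regluing-by-a-pure-pseudo-Anosov setup are sound as far as they go: the accounting with $2n$ vertical arcs and $b-n$ $\partial$-parallel arcs on the product side is correct, $n<b$ does exactly what you say (it makes $\mathcal{D}_2$ nonempty), and requiring $\varphi$ to fix each puncture does ensure the reglued object is again an $n$-strand tangle with no closed components.

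The final step, however, has a genuine gap, and it is the step on which the whole argument turns. The claim that each disk set $\mathcal{D}_i$ has diameter at most $2$ in $\mathcal{C}(\Sigma_{\tau})$ is false. The exterior $B_1\setminus\eta(\tau_1)$ of the trivial $b$-strand tangle is a handlebody of genus $b-1\geq 2$, and the boundaries of its compressing disks form a subset of $\mathcal{C}(\Sigma_{\tau})$ of \emph{infinite} diameter: by Masur--Minsky such disk sets are quasi-convex, not bounded, and the innermost-disk argument you invoke gives connectivity of the disk complex, not a curve-complex diameter bound. With infinite-diameter disk sets the inequality $d(\mathcal{D}_1,\varphi^k(\mathcal{D}_2))\geq \ell k - C$ with $C$ controlled by $\operatorname{diam}\mathcal{D}_1+\operatorname{diam}\mathcal{D}_2$ has no content, and loxodromicity of $\varphi$ alone is not enough. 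One must additionally arrange that the stable and unstable laminations of $\varphi$ avoid the limit sets of both $\mathcal{D}_1$ and $\mathcal{D}_2$ in the Gromov boundary of $\mathcal{C}(\Sigma_{\tau})$, and then justify that a puncture-fixing pseudo-Anosov with this property exists (this uses nowhere-density of the disk-set limit set together with a nontrivial selection argument). That is precisely the hard content supplied by the sweepout and subsurface-projection machinery of \cite{bty}, and sidestepped entirely by the explicit plat computations of \cite{JM}. In short, the point you labeled ``routine'' at the end is the crux of the problem, and as stated it is not available.
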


We remark that the existence of high distance tangles also follows from \cite{JM}, discussed in Section \ref{high-distance}.

\section{Tangle sum and bridge number}\label{tangle-sum}
In this section, we show that if two $n$-strand tangles $(B,\tau)$ and $(B',\tau')$ are glued together to form a knot $K$ in $S^3$ and both of the tangles have a bridge sphere of sufficiently high distance, then we may determine the bridge number of the resulting knot.  In this case, we say that $K$ is the \emph{tangle sum} of $(B,\tau)$ and $(B',\tau')$.  Fist, we make several definitions about the intersections of various c-disks for a bridge sphere for $K$.

Suppose $\Sigma$ is a bridge sphere for a knot $K$.  If there are c-disks $D$ and $D'$ on opposite sides of $\Sigma$ such that $D \cap D' = \emp$, we say that $K$ is \emph{c-weakly reducible}.  Otherwise, $K$ is \emph{c-strongly irreducible}.

\begin{lemma}\label{cthin}
Suppose $k$ is an embedding of $K$ such that $k$ has one thick level, a bridge sphere $\Sigma$.  If $\Sigma$ is c-weakly reducible.  Then we may exhibit an embedding $k'$ of $K$ such that $b(k') = b(k)$ and $w(k') < w(k)$.
\begin{proof}
Suppose $(S^3,K) = (B_1,\tau_1) \cup_{\Sigma} (B_2,\tau_2)$, and let $D_1$ and $D_2$ be c-disks in $B_1$ and $B_2$, respectively, such that $D_1 \cap D_2 = \emp$.  Since $\Sigma$ is a 2-sphere, there exist disks $C_1$ and $C_2$ in $\Sigma$ (these disks necessarily intersect $K$) such that $\pd C_i = \pd D_i$ and $C_1 \cap C_2 = \emp$.  Since $D_i$ is a c-disk, we may choose an arc $t_i \in \tau_i$ such that $\pd t_i \subset C_i$.  Let $\Delta_i$ be a bridge disk for $t_i$ that intersects $D_i$ minimally.  By a standard cut-and-paste argument, we may show that $\Delta_i \cap D_i = \emp$; hence $\Delta_i \cap \Sigma \subset C_i$.  It follows that $\Delta_1 \cap \Delta_2 =\emp$, and there is an isotopy of $k$ supported in $\eta(\Delta_1) \cup \eta(\Delta_2)$ which moves a minimum above a maximum, yielding $k'$ such that $b(k')=b(k)$ and $w(k') = w(k) - 4$.
\end{proof}
\end{lemma}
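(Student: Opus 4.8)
The plan is to convert the disjoint c-disks supplied by c-weak reducibility into a pair of disjoint bridge disks on opposite sides of $\Sigma$, and then run the standard thinning isotopy that such a pair supports. I would fix the bridge splitting $(S^3,K)=(B_1,\tau_1)\cup_{\Sigma}(B_2,\tau_2)$ and disjoint c-disks $D_1\subset B_1$, $D_2\subset B_2$. Since $\Sigma$ is a 2-sphere, the disjoint curves $\pd D_1,\pd D_2$ cut it into two disks and an annulus; let $C_i\subset\Sigma$ be the disk with $\pd C_i=\pd D_i$ missing $\pd D_{3-i}$, so $C_1\cap C_2=\emp$. Essentiality of $\pd D_i$ in $\Sigma_{\tau}$ guarantees that $C_i$ contains a puncture, and in fact at least two: for a compressing disk this is automatic, since a 2-sphere in $S^3$ meets $K$ an even number of times, while for a cut disk one first arranges that $\pd D_i$ does not bound a once-punctured disk in $\Sigma$ (a case handled on its own).

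Next I would locate a bridge disk on each side. Pushing $D_i$ slightly into $B_i$, the sphere $D_i\cup C_i$ bounds a ball $W_i$ in the 3-ball $B_i$, and every component of $\tau_i$ meeting $W_i$ has its endpoints on $D_i\cup C_i$. As $D_i$ is disjoint from $K$ (the compressing case) or meets $K$ in a single interior point (the cut case), all but at most one such component has both endpoints on $C_i$, so, $C_i$ having at least two punctures, some arc $t_i\subset\tau_i$ has $\pd t_i\subset C_i$. Because $\Sigma$ is a bridge sphere, $t_i$ is $\pd$-parallel and bounds a bridge disk $\Delta_i\subset B_i$, which I would take with $|\Delta_i\cap D_i|$ minimal. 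A cut-and-paste then removes the remaining intersections: a circle of $\Delta_i\cap D_i$ innermost on $D_i$ bounds a subdisk of $D_i$ that must be disjoint from $K$ (otherwise it closes up with a subdisk of the bridge disk $\Delta_i$ into a 2-sphere meeting $K$ exactly once), so it can be surgered off; an arc of $\Delta_i\cap D_i$ outermost on $D_i$ has both endpoints on $\Delta_i\cap\Sigma$ and yields a bridge disk for $t_i$ with fewer intersections. Minimality then forces $\Delta_i\cap D_i=\emp$, whereupon $\Delta_i\cap\Sigma$ is an arc of $\Sigma\setminus\pd D_i$ joining two punctures of $C_i$, so $\Delta_i\cap\Sigma\subset C_i$.

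Finally, since $C_1\cap C_2=\emp$ and $\Delta_i$ lies on the $B_i$ side of $\Sigma$, the bridge disks $\Delta_1$ and $\Delta_2$ are disjoint. With a bridge disk above $\Sigma$ and a disjoint bridge disk below, the standard thinning move --- an ambient isotopy of $k$ supported in $\eta(\Delta_1)\cup\eta(\Delta_2)$ that pushes the minimum of $t_2$ above the maximum of $t_1$ --- produces an embedding $k'$ of $K$. This isotopy creates and destroys no critical points of $h|_k$, so $b(k')=b(k)$, while a direct computation with the width formula~(\ref{alt}) --- the single thick level of $k$ being replaced by two thick levels and one thin level --- gives $w(k')=w(k)-4<w(k)$.

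I expect the main obstacle to lie in the middle two paragraphs: confirming that $C_i$ genuinely contains a matched pair of endpoints of $\tau_i$, which is where essentiality of $\pd D_i$, the compressing-versus-cut-disk dichotomy, and the once-punctured cut-disk case all enter, together with the cut-and-paste needed to make the bridge disk $\Delta_i$ disjoint from the c-disk $D_i$.
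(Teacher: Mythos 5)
Your argument matches the paper's proof step for step: cut $\Sigma$ along $\pd D_1,\pd D_2$ into disjoint disks $C_1,C_2$, find an arc $t_i\subset\tau_i$ with $\pd t_i\subset C_i$, take a bridge disk $\Delta_i$ meeting $D_i$ minimally and show by cut-and-paste that $\Delta_i\cap D_i=\emp$ so $\Delta_i\cap\Sigma\subset C_i$, then use the disjoint bridge disks $\Delta_1,\Delta_2$ to move a minimum above a maximum, dropping the width by $4$ while preserving the bridge number. The only superfluous step is the ``once-punctured cut disk'' case you propose to treat separately, since the boundary of any c-disk is essential in $\Sigma_K$ by definition, which already forbids $C_i$ from being a once-punctured disk.
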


Now we connect the distance of tangles to c-weak reducibility. To do this, we need the following lemma due to the second author.

\begin{lemma}\cite{Z2}\label{nonesting}
Suppose $\Sigma$ is a $c$-strongly irreducible bridge sphere for a knot $K$ in $S^3$, where $(S^3,K)=(B_1,\tau_1)\cup_{\Sigma}(B_2,\tau)$. If $c$ is an essential curve in $\Sigma_K$ such that $c$ bounds a disk $D$ embedded in $E(K)$, where a collar of $c$ in $D$ is disjoint from $\Sigma_K$, then $c$ bounds a compressing disk for $\Sigma_K$ in $B_i \setminus \eta(\tau_i)$ for $i = 1$ or $2$.
\end{lemma}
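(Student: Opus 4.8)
The plan is to isotope the disk $D$ into a standard position with respect to $\Sigma_K$ and then play the two handlebodies $B_1 \setminus \eta(\tau_1)$ and $B_2 \setminus \eta(\tau_2)$ off against each other using $c$-strong irreducibility; this is the bridge-surface analogue of the classical fact that a strongly irreducible Heegaard surface is incompressible after puncturing. First I would make $D$ transverse to $\Sigma_K$, so that $D \cap \Sigma_K$ is $c$ together with finitely many disjoint circles in the interior of $D$. If such an interior circle is innermost on $D$ and inessential in $\Sigma_K$, then it bounds a subdisk $D' \subseteq D$ meeting $\Sigma_K$ only in $\partial D'$, and also a disk $E \subseteq \Sigma$ containing zero or one punctures of $K$; the one-puncture case is impossible, since $D' \cup E$ would be a $2$-sphere meeting $K$ once, so $E \subseteq \Sigma_K$ and $D' \cup E$ bounds a ball in the irreducible manifold $E(K)$, across which I isotope $D$ to remove the circle. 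These isotopies are supported away from a collar of $c$, so the hypotheses persist, and repeating them I may assume every circle of $D \cap \Sigma_K$ is essential in $\Sigma_K$; among all such disks I choose $D$ with $|D \cap \Sigma_K|$ minimal. I also record two facts: since $c = \partial D$ with $D \subseteq E(K)$, the class $[c]$ vanishes in $H_1(E(K))$, so $c$ bounds no cut disk for $\Sigma_K$ in either $B_i$; the same applies to any circle of $D \cap \Sigma_K$, as it bounds a subdisk of $D$.

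Relabel so that the collar of $c$ in $D$ lies in $B_1 \setminus \eta(\tau_1)$. If $D \cap \Sigma_K = \{c\}$, then the interior of $D$ is connected and disjoint from $\Sigma_K$, so $D$ is properly embedded in $B_1 \setminus \eta(\tau_1)$, and since $c$ is essential in $\Sigma_K$ this $D$ is the desired compressing disk. Otherwise I argue by induction on $|D \cap \Sigma_K|$, the base case being the previous sentence. Let $P_0$ be the component of $D$ cut along $D \cap \Sigma_K$ that contains $c$; it is a planar surface properly embedded in the handlebody $H_1 := B_1 \setminus \eta(\tau_1)$ with $\partial P_0 = c \cup \gamma_1 \cup \cdots \cup \gamma_m$, $m \geq 1$, each $\gamma_i$ essential in $\Sigma_K$. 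Each $\gamma_i$ bounds a subdisk $E_i \subseteq D$ whose collar along $\gamma_i$ lies in $B_2 \setminus \eta(\tau_2)$ and which satisfies $|E_i \cap \Sigma_K| < |D \cap \Sigma_K|$; by the inductive hypothesis $\gamma_i$ bounds a compressing disk for $\Sigma_K$ in $B_1 \setminus \eta(\tau_1)$ or $B_2 \setminus \eta(\tau_2)$, and in the first case a standard argument (cap $E_i$ with this disk to form a $2$-sphere, which bounds a ball, and use minimality of $|D \cap \Sigma_K|$) shows that $E_i$ itself is a compressing disk, necessarily in $B_2 \setminus \eta(\tau_2)$ since that is where its collar lies. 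Either way $\gamma_i$ bounds a compressing disk $\delta_i$ in $B_2 \setminus \eta(\tau_2)$. Making the $\delta_i$ pairwise disjoint and meeting $D$ only in $\bigcup_i \gamma_i$ (innermost-circle arguments in $E(K)$ and in the handlebody $B_2 \setminus \eta(\tau_2)$) and rerouting $D$ through them, I obtain an embedded disk $P_0 \cup \bigcup_i \delta_i$ with boundary $c$, collar still in $B_1$, all intersection circles essential, and no more intersection circles than $D$; minimality lets me replace $D$ by it, so I may assume $D = P_0 \cup \bigcup_i \delta_i$.

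Now I would use $c$-strong irreducibility. Minimality of $|D \cap \Sigma_K|$ forces $P_0$ to be incompressible in $H_1$ (a compressing disk for $P_0$ would, via an innermost/ball argument, let one decrease $|D \cap \Sigma_K|$). Since $P_0$ is a non-disk incompressible surface properly embedded in a handlebody, it is $\partial$-compressible, and successive $\partial$-compressions eventually produce a disk $P_0^{\ast}$ properly embedded in $H_1$ whose boundary is obtained from $c, \gamma_1, \ldots, \gamma_m$ by band sums along arcs in $\partial H_1 = \Sigma_K \cup (\text{meridional annuli of }\eta(\tau_1))$; in particular $\partial P_0^{\ast}$ can be isotoped off the $\gamma_i$. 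Isotoping $\partial P_0^{\ast}$ to meet the meridional annuli minimally, one of two things happens. If $\partial P_0^{\ast}$ is disjoint from them, then $\partial P_0^{\ast} \subseteq \Sigma_K$: if it is essential there, $P_0^{\ast}$ is a compressing disk for $\Sigma_K$ in $B_1 \setminus \eta(\tau_1)$ disjoint from the $c$-disk $\delta_1$ in $B_2 \setminus \eta(\tau_2)$, contradicting $c$-strong irreducibility; if it is inessential, it bounds an unpunctured disk in $\Sigma$ (the one-puncture case being impossible as before), which, tracing the band sums back, shows that $c$ cobounds a planar surface in $\Sigma_K$ with the $\gamma_i$ and hence bounds a compressing disk in $B_2 \setminus \eta(\tau_2)$ together with the $\delta_i$, finishing the proof. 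If instead $\partial P_0^{\ast}$ essentially meets the meridional annuli, then capping off those intersections with meridian disks converts $P_0^{\ast}$ into a compressing disk or a cut disk for $\Sigma_K$ in $B_1 \setminus \eta(\tau_1)$, again disjoint from $\delta_1$, contradicting $c$-strong irreducibility. In every case the presence of interior intersection circles is untenable, completing the induction.

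The main obstacle is the third paragraph: carrying out the $\partial$-compression analysis of the planar surface $P_0$ inside the handlebody $H_1$ while tracking how the resulting boundary curve meets the meridional annuli of $\eta(\tau_1)$ — the subcase in which it meets them essentially is precisely where a cut disk is produced, and is the reason the hypothesis is $c$-strong irreducibility rather than ordinary strong irreducibility — and then isolating the exceptional case in which $c$ becomes parallel in $\Sigma_K$ to a multicurve bounding a disk on the $B_2$ side. The transversality normalization, the reduction to essential intersection circles, the homological remark ruling out cut disks bounded by $c$ itself, and the rerouting of $D$ through the disks $\delta_i$ are all routine.
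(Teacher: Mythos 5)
The paper does not prove this lemma; it is cited directly from \cite{Z2}, so there is no internal proof to compare against, and I will assess your argument on its own terms.

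Your overall strategy is the natural one: it is the Scharlemann-style ``no-nesting'' argument, adapted to $c$-disks. Minimizing $|D \cap \Sigma_K|$, removing inessential intersection circles, observing that a curve bounding a disk in $E(K)$ is null-homologous and hence cannot bound a cut disk, inducting to show that the circles $\gamma_i$ bound compressing disks on the $B_2$ side, and then analyzing the outermost planar piece $P_0 \subset H_1$ is a sensible plan, and you correctly identify that the cut-disk possibilities are exactly why the hypothesis is $c$-strong irreducibility rather than ordinary strong irreducibility. The first two paragraphs are essentially right (the replacement of $E_i$ by a $B_1$-side compressing disk does need a further innermost-circle argument to keep $D$ embedded, but that is genuinely routine).

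The third paragraph, which you yourself flag as ``the main obstacle,'' is where the real gaps are, and they are not merely bookkeeping. First, $P_0$ incompressible and not a disk in a handlebody gives that $P_0$ is $\partial$-compressible \emph{or} $\partial$-parallel; the $\partial$-parallel case is in fact the case in which $c$ ends up bounding on the $B_2$ side (by capping off the parallel region with the $\delta_i$), and it must be treated. Second, a $\partial$-compression along an arc with both endpoints on the same boundary circle of the planar surface $P_0$ disconnects it, so ``successive $\partial$-compressions eventually produce a disk $P_0^{\ast}$'' requires choosing which component to follow and tracking the boundary. Third, the assertion that $\partial P_0^{\ast}$ ``can be isotoped off the $\gamma_i$'' is not justified: the bands glued along the $\partial$-compression arcs can run over the $\gamma_i$ and over the meridional annuli of $\eta(\tau_1)$, so it is not clear that the final disk is disjoint from $\delta_1$, which is what your contradiction with $c$-strong irreducibility requires. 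Fourth, ``capping off those intersections with meridian disks'' when $\partial P_0^{\ast}$ meets the annuli essentially is vague — meridian disks of $\eta(\tau_1)$ are punctured by $K$, so this operation does not stay in $E(K)$, and it is not explained how it produces a $c$-disk disjoint from $\delta_1$. Until these points are resolved, the argument identifies the right ingredients but does not establish the lemma.
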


Next, we synthesize Lemma \ref{nonesting} and Theorem \ref{tangdist1} into a single result.

\begin{lemma}\label{tangdist3}
Suppose that $K$ is a prime knot in $S^3$, $(S^3,K)$ contains a 3-ball $M$ with $K$ transverse to $\pd M$ and $(\pd M)_{K}$ is incompressible in $E(K)$.  Let $T = K \cap M$.  In addition, let $\Sigma$ be a bridge sphere for $(M,T)$ and let $\Sigma'$ be a bridge sphere for $(S^3,K)$.  Then one of the following holds:
{\bi
\item After isotopy, $\Sigma' \cap \pd M = \emp$.
\item $\Sigma'$ is c-weakly reducible.
\item $d(\Sigma) \leq 2 - \chi(\Sigma'_K)$.
\item $\chi(\Sigma_K) \geq -3$.
\ei}
\begin{proof}
By Theorem \ref{tangdist1}, if the third and fourth conclusions do not hold, then after isotopy, there exists a collection $\{D_i\}$ of c-compressing disks of $\Sigma_K' \cap M$ in $M$ such that compressing along $\{D_i\}$ yields a compressed surface $\Sigma''_K$ such that $\Sigma_K'' \cap M$ is parallel to $\Sigma_K$, and we may choose $\{D_i\}$ so that the collection $\{\pd D_i\}$ is pairwise disjoint.  Let $\{\gamma_i\}$ denote the collection of curves in $\Sigma_K'$ such that $\gamma_i = \pd D_i$.  If any $\gamma_i$ is inessential in $\Sigma_K'$, pick an innermost such $\gamma_i$.  The curve $\gamma_i$ bounds a disk or cut disk $D'$ contained in $\Sigma'_K$ and since $\gamma_i$ is essential in $\Sigma'_K \cap M$, we have $D' \cap \pd M \neq \emp$.  In this case, since $K$ is prime and $E(K)$ is irreducible, compressing along $D_i$ and discarding a trivial 2-sphere or $\pd$-parallel annulus yields the same surface as isotoping $D'$ onto $D_i$, and this isotopy yields a new surface $\Sigma^*_K$ which intersects $\pd M$ fewer times.

If every curve in $\{\gamma_i\}$ is inessential in $\Sigma'_K$, we can repeat the above process to show that after isotopy, $\Sigma'_K = \Sigma''_K$, and as such $\Sigma'_K$ may be made disjoint from $\pd M$.  Otherwise, some $\gamma_i$ is essential in $\Sigma'_K$. Let $D_j$ be the first c-disk in the sequence of c-compressions such that $\gamma_j$ is essential in $\Sigma'_K$. Hence $\gamma_j$ bounds a c-disk for $\Sigma'$. As $\Sigma''$ is a bridge sphere for $(M,T)$, we can find compressing disks $C$ and $C'$ for $\Sigma''_K$ which are regular neighborhoods of bridge disks $\Delta$ and $\Delta'$ on opposite sides of $\Sigma''$ such that $\Delta$ and $\Delta'$ meet in a point $p$ contained in $K$. Let $c$ and $c'$ be the boundary curves of $C$ and $C'$ in $\Sigma''_K$. Since $\gamma_j$ bounds a c-compressing disk taking $\Sigma'_K$ to $\Sigma''_K$ and $c$ and $c'$ are contained in $\Sigma''_K$, both $c$ and $c'$ are disjoint from $\gamma_j$.

To form a contradiction, we assume that $\Sigma'$ is c-strongly irreducible.  We may recover $\Sigma'_K$ by tubing $\Sigma''_K$ to a collection of meridional surfaces some number of times.  These tubes may be chosen to be disjoint from $c$ and $c'$; hence, we may view $c$ and $c'$ as essential curves in $\Sigma'_K$ that bound disks meeting the hypothesis of Lemma \ref{nonesting}. Since we have assumed that $\Sigma'$ is strongly irreducible, then, by Lemma \ref{nonesting}, both $c$ and $c'$ bound compressing disks $E$ and $E'$ for $\Sigma'_K$. If $E$ and $E'$ are on opposite sides of $\Sigma'_K$, then $D_j$ and one of $E$ and $E'$ are c-disks for $\Sigma'_K$ embedded on opposite sides of $\Sigma'_K$, a contradiction to our assumption. Hence, we can assume that $E$ and $E'$ are embedded on the same side of $\Sigma'$.

By construction, $c$ bounds a twice-punctured disk $F \subset \Sigma''_K$ and $c'$ bounds a twice-punctured  disk $F' \subset \Sigma''_K$, where $F \cap F'$ is a collection of unpunctured disks in $\Sigma''_K$ together with a single disk component $F''$ containing the single puncture $p$ named above.  Let $\alpha$ and $\alpha'$ denote the arcs in $c$ and $c'$ which cobound $F''$.

After the tubing operations, $\alpha \cup \alpha'$ bounds a punctured disk $G''$ in $\Sigma'_K$ which is the intersection of punctured disks $G$ and $G'$ bounded by $c$ and $c'$ in $\Sigma'_K$.  Moreover, by construction every puncture contained in $G \cap G'$ is contained in $G''$ since we can assume that all components of $F \cap F'$ other than $F''$ are disjoint from the tubes in the tubing operations.  It is possible that in $\Sigma'_K$, $G''$ contains more than a single puncture; however, $|K \cap G''| \equiv |K \cap F''| \, (\text{mod } 2)$, since the tubing operation preserves this parity.  On each side of $\Sigma'$, there is a natural pairing of $\Sigma'\cap K$ given by the collection of trivial arcs (each puncture is connected to exactly one other puncture).  However, since $c$ bounds a compressing disk for $\Sigma'_K$, all punctures of $G$ must be paired with punctures in $G$, and since $c'$ bounds a compressing disk, all punctures of $G'$ must be paired with punctures in $G'$.  It follows that there is a pairing on the punctures in $G''$, a contradiction to the fact that $G''$ meets $K$ in an odd number of points. Thus, $\Sigma'$ must be c-weakly reducible.
\end{proof}
\end{lemma}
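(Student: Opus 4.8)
The plan is to apply Theorem~\ref{tangdist1} with $(N,K) = (S^3,K)$ and the $3$-ball $M$ (with $T = K \cap M$) sitting inside it; the hypothesis that $(\partial M)_K$ is incompressible in $E(K)$ supplies, and in fact exceeds, the incompressibility requirement of that theorem. If either of the last two conclusions of Theorem~\ref{tangdist1} holds we land immediately in the third or fourth conclusion of the lemma, so suppose neither does. Then, after isotopy, there is a collection of c-compressing disks $\{D_i\}$ for $\Sigma'_K \cap M$ in $M$, with $\{\partial D_i\}$ pairwise disjoint, whose c-compression produces a surface $\Sigma''_K$ with $\Sigma''_K \cap M$ parallel to $\Sigma_K$. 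Write $\gamma_i = \partial D_i \subset \Sigma'_K$; the rest of the argument turns on analyzing these curves.

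First I would dispose of the inessential ones. If some $\gamma_i$ is inessential in $\Sigma'_K$, an innermost such curve bounds a disk or cut disk $D'$ in $\Sigma'_K$, and $D'$ must meet $\partial M$ because $\gamma_i$ is essential in $\Sigma'_K \cap M$. Using that $K$ is prime and $E(K)$ is irreducible, I would show that c-compressing along $D_i$ and then discarding a trivial sphere or a boundary-parallel annulus has the same net effect as isotoping $D'$ across $D_i$, and that this isotopy strictly lowers $|\Sigma' \cap \partial M|$. Iterating, either every $\gamma_i$ is inessential, in which case $\Sigma'_K$ is isotopic to $\Sigma''_K$ and can be pushed off $\partial M$, giving the first conclusion, or some $\gamma_j$, chosen first in the c-compression sequence, is essential in $\Sigma'_K$ and hence bounds a c-disk for $\Sigma'$.

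In this remaining case I would argue by contradiction, assuming $\Sigma'$ is c-strongly irreducible. Since $\Sigma''$ is a bridge sphere for $(M,T)$, it carries bridge disks $\Delta, \Delta'$ on opposite sides meeting in a single point $p \in K$; taking regular neighborhoods yields compressing disks $C, C'$ for $\Sigma''_K$ whose boundary curves $c, c'$ lie in $\Sigma''_K$ and so are disjoint from $\gamma_j$. Recovering $\Sigma'_K$ from $\Sigma''_K$ by tubing along arcs kept disjoint from $c$ and $c'$, the curves $c, c'$ survive as essential curves of $\Sigma'_K$ bounding disks satisfying the hypothesis of Lemma~\ref{nonesting}, so each bounds a compressing disk $E$, $E'$ for $\Sigma'_K$. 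If $E$ and $E'$ lie on opposite sides of $\Sigma'$ we are finished, since then $E$ and $E'$ --- or $D_j$ together with whichever of them lies on the opposite side --- witness c-weak reducibility, contradicting the assumption. So $E$ and $E'$ lie on the same side, and here lies the real work: a parity argument. On $\Sigma''_K$ the curve $c$ bounds a twice-punctured disk $F$ and $c'$ a twice-punctured disk $F'$, arranged so that $F \cap F'$ consists of unpunctured disks together with one disk $F''$ carrying exactly the puncture $p$; letting $\alpha \subset c$ and $\alpha' \subset c'$ be the arcs cobounding $F''$, after tubing $\alpha \cup \alpha'$ cobounds a punctured disk $G''$ in $\Sigma'_K$ with $|K \cap G''|$ odd, since tubing preserves this count mod $2$ and the tubes are localized away from $F''$. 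But $c$ bounding a compressing disk forces the canonical trivial-arc pairing of $\Sigma' \cap K$ on that side to pair every puncture in the disk $G$ bounded by $c$ with another puncture of $G$, and similarly for the disk $G'$ bounded by $c'$; intersecting, this pairs up the punctures in $G'' = G \cap G'$, contradicting that their number is odd. Hence $\Sigma'$ is c-weakly reducible, which is the second conclusion.

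The step I expect to be the main obstacle is this final parity bookkeeping: choosing $F$ and $F'$ so their intersection isolates the single puncture $p$, and verifying that the tubing operations recovering $\Sigma'_K$ from $\Sigma''_K$ can be localized away from $c$, $c'$, and $F''$ so that $|K \cap G''|$ is genuinely odd. A secondary technical point is the cut-and-paste in the inessential-curve step, where one must confirm that trading a c-compression for an isotopy of $D'$ really does decrease $|\Sigma' \cap \partial M|$ and that the process terminates.
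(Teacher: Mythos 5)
Your proposal is correct and tracks the paper's own proof essentially step for step: the same application of Theorem~\ref{tangdist1}, the same disposal of inessential $\gamma_i$ via primeness and irreducibility, the same invocation of Lemma~\ref{nonesting} via the disks $C,C'$ coming from crossing bridge disks, and the same parity contradiction on $G''$. The only slight wobble is your ``or $E$ and $E'$'' hedge for witnessing c-weak reducibility --- $c$ and $c'$ intersect, so $E,E'$ alone do not suffice --- but you also name the correct witness ($D_j$ paired with whichever of $E,E'$ sits on the opposite side), which is exactly what the paper uses.
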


We will need a result regarding c-essential surfaces in trivial tangles.

\begin{lemma}\label{trivial}
Suppose $B$ is $S^2 \X I$ or a 3-ball, $(B,\tau)$ is a trivial tangle, and $S$ is a properly embedded essential surface in $B \setminus \eta(\tau)$.  Then $S \cap \pd B \neq \emp$.
\begin{proof}
If $B$ is a 3-ball, then the lemma follows from Lemma 2.9 of \cite{tomova2}.  To form a contradiction, suppose that $S$ is a properly embedded essential surface in $B \setminus \eta(\tau)$ such that $S \cap \pd B = \emp$ and $B=S^2 \X I$.  Let $\{\Delta_i\}$ be a collection of bridge disks for the boundary parallel arcs of $\tau$.

By irreducibility of $B \setminus \eta(\tau)$, we can assume that there are no closed curves of intersection between $S$ and $\bigcup_i \Delta_i$. Since $S \cap \pd B = \emp$, any arc of $S\cap \bigcup_i \Delta_i$ has both boundary components in $K$. Let $S'$ be the component of $S$ that meets some $\Delta_j$ in an outermost arc $\beta$ in $\Delta_j$. The arc $\beta$ together with a subarc of $K$ cobound a disk in $\Delta_j$ that is disjoint from $S$ in its interior. The boundary of a regular neighborhood of this disk contains a compressing disk for $S$, unless $S'$ is a boundary parallel annulus. In each case, we contradict the fact that $S$ is essential. Hence, we can assume that $S$ is disjoint from $\bigcup_i \Delta_i$.

Since $S$ is disjoint from $\bigcup_i \Delta_i$, then $S$ is a properly embedded essential surface in $B \setminus \eta(\tau')$ where $\tau'$ is the vertical arcs in $\tau$. By Lemma 2.10 of \cite{tomova2}, $S$ is inessential, a contradiction.
\end{proof}
\end{lemma}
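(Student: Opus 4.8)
The plan is to argue by contradiction. Suppose that $S$ is essential in $B\setminus\eta(\tau)$ but that $S\cap\pd B=\emp$; I will obtain a contradiction by isotoping $S$ off a complete system of bridge disks and then observing that the resulting ambient manifold is too simple to contain a closed essential surface. Write $\tau=\tau_{\pd}\cup\tau_v$, where $\tau_{\pd}$ is the collection of $\pd$-parallel arcs of $\tau$ and $\tau_v$ the collection of vertical arcs (so $\tau_v=\emp$ when $B$ is a $3$-ball). Each arc of $\tau_{\pd}$ is parallel into $\pd B$ through a bridge disk, and a routine innermost-disk argument provides a pairwise disjoint family $\{\Delta_i\}$ of such bridge disks, chosen also disjoint from $\tau_v$. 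Once $S$ has been isotoped off $\bigcup_i\Delta_i$ it will lie in the submanifold $B\setminus\eta(\tau_{\pd}\cup\bigcup_i\Delta_i\cup\tau_v)$ of $B\setminus\eta(\tau)$, which is homeomorphic to $B\setminus\eta(\tau_v)$; this last manifold is a $3$-ball when $B$ is a ball, and is either a copy of $S^2\X I$ or a handlebody when $B=S^2\X I$. Since incompressibility is inherited by passing to a submanifold and none of these manifolds contains a closed essential surface, this contradicts the essentiality of $S$. (For the base cases one may instead quote Lemma 2.9 of \cite{tomova2} for the $3$-ball and Lemma 2.10 of \cite{tomova2} once the tangle has been reduced to its vertical arcs.)

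To effect the isotopy, I would first minimize $|S\cap\bigcup_i\Delta_i|$. A trivial tangle is irreducible, so $B\setminus\eta(\tau)$ is irreducible, and an innermost circle of $S\cap\Delta_j$ bounds a subdisk $D$ of $\Delta_j$ meeting $S$ only along its boundary. If that circle is essential in $S$, surgering $S$ along $D$ contradicts incompressibility of $S$; otherwise the circle also bounds a disk in $S$, and $S$ may be isotoped across the ball bounded by their union to decrease $|S\cap\bigcup_i\Delta_i|$. Hence $S\cap\bigcup_i\Delta_i$ may be taken to consist of arcs, and since $S\cap\pd B=\emp$ each such arc has both endpoints on $\pd\eta(\tau)$. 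Let $\beta$ be an arc of $S\cap\Delta_j$ that is outermost in $\Delta_j$; it cuts off a disk $E\subset\Delta_j$ with interior disjoint from $S$, where $\pd E=\beta\cup\delta$ and $\delta\subset\pd\eta(\tau)$. If $\beta$ is inessential in $S$, isotoping $S$ across $E$ lowers $|S\cap\bigcup_i\Delta_i|$, contradicting minimality; so $\beta$ is essential in $S$, and $E$ is a $\pd$-compressing disk for $S$. The boundary of a regular neighborhood of $E$ then contains a compressing disk for $S$, contradicting incompressibility, unless the component of $S$ containing $\beta$ is a $\pd$-parallel annulus, which contradicts essentiality. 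In every case we reach a contradiction, so $S$ can indeed be made disjoint from $\bigcup_i\Delta_i$, and the preceding paragraph applies.

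The delicate step is the last one above: checking that an outermost arc of $S\cap\Delta_j$ really does force either a compressing disk for $S$ or a $\pd$-parallel annulus component, and bookkeeping the two halves of essentiality (incompressible and not $\pd$-parallel) together with irreducibility of $B\setminus\eta(\tau)$ so that every possibility is excluded. By contrast, the input that $S^2\X I$ and the relevant handlebodies contain no closed essential surface is standard — and is exactly what Lemmas 2.9 and 2.10 of \cite{tomova2} are there to supply — so I expect no trouble with that part.
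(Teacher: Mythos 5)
Your argument is correct and follows essentially the same route as the paper's: reduce for contradiction, use irreducibility to eliminate circles of intersection with the bridge disks, an outermost-arc $\pd$-compression to eliminate arcs of intersection (with the same dichotomy between producing a compressing disk and having a $\pd$-parallel annulus component), and then quote Lemmas 2.9/2.10 of Tomova once $S$ lies in the complement of the vertical arcs. The only cosmetic difference is that you fold the 3-ball case into the same argument rather than immediately citing Lemma 2.9, and you spell out that $B\setminus\eta(\tau_v)$ is a ball, $S^2\times I$, or handlebody; the substance is identical.
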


To prove the first main theorem, we need to slightly adapt the notion of width.  For an embedding $k$ of a knot $K$, define the \emph{bridge-width} $bw(k)$ to be $bw(k) = (b(k),w(k))$, and let the \emph{bridge-width} $bw(K)$ of $K$ be defined as
\[ bw(K) = \min_{k \sim K} bw(k),\]
where this minimum is taken using the dictionary ordering.  Any $k$ satisfying $bw(k) = bw(K)$ is called a \emph{bridge-thin} position, and it is clear from the definition that in this case $b(k) = b(K)$.  We note that the following theorem is implicit in the arguments of Wu in \cite{wu}:

\begin{theorem}\cite{wu}\label{thinner1}
Suppose $k$ is a bridge-thin position of $K$ which has a thin level.  Then, any thinnest thin level $h^{-1}(r_i)$ is an essential meridional surface in $E(K)$.
\begin{proof}
In \cite{wu}, Wu shows that if a thinnest thin level of an embedding $k$ of $K$ is compressible, then we may exhibit an embedding $k'$ of $K$ such that $b(k') \leq b(k)$ and $w(k') < w(k)$.  The desired statement easily follows.
\end{proof}
\end{theorem}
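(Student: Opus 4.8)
The plan is to argue by contradiction, using the dictionary ordering built into bridge-thinness. Assume $k$ is bridge-thin and has a thin level, but some thinnest thin level $P = h^{-1}(r_i)$ is not essential in $E(K)$. First I would record that $P$ is automatically meridional: since $P$ meets $K$ transversely, $P_K = P \cap E(K)$ is a planar surface whose boundary curves are meridians of $K$. Hence $P_K$ being non-essential means that it is either $\partial$-parallel or compressible in $E(K)$, and in each case I would produce an embedding $k'$ of $K$ with $bw(k') < bw(k)$, contradicting $bw(k) = bw(K)$.

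For the $\partial$-parallel case I would first observe that this forces $w(P) = 2$. Indeed, $\partial$-parallelism yields a product region $P_K \times I$ cut off by $P_K$ together with a subsurface $P'$ of the torus $\partial E(K)$ with $\partial P' = \partial P_K$; cutting the torus along the parallel meridians $\partial P_K$ produces $w(P)$ annuli, and a connected subsurface whose boundary is all of $\partial P_K$ must be a single such annulus, so $w(P) = 2$. Then on one side of $P$ the closed ball $B_1$ it bounds, with $\tau_1 = K \cap B_1$, satisfies $B_1 \setminus \eta(\tau_1) \cong P_K \times I$, a solid torus, and $\tau_1$ is a single arc; consequently $\tau_1$ is unknotted (boundary-parallel) in $B_1$. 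Because $P$ is a thin level, each of the two balls it bounds contains a critical point of $h|_k$ adjacent to $P$ (a maximum below, a minimum above); in particular $B_1$ does, so $\tau_1$ is not monotone. I would then isotope $\tau_1$ rel its endpoints, inside $B_1$, to a monotone arc; this isotopy of $k$ is supported in $B_1$, cancels at least one maximum and minimum, and strictly reduces the width, so the resulting $k'$ has $b(k') < b(k)$, which settles this case.

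For the compressible case I would invoke Wu's thinning argument: starting from a compressing disk $D$ for the thinnest thin level $P$, an outermost-arc exchange between $D$ and the bridge disks of the thick region of $k$ adjacent to $P$ puts $D$ into a position from which a width-reducing isotopy of $K$ supported near $D$ — of the same flavour as the move in Lemma \ref{cthin}, carrying a maximum below a minimum — can be performed. As recorded in the statement, this produces $k'$ with $b(k') \le b(k)$ and $w(k') < w(k)$; combining these, $bw(k') = (b(k'), w(k')) < (b(k), w(k)) = bw(k)$ in the dictionary order, again a contradiction.

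I expect the main obstacle to be the compressible case, specifically the bookkeeping showing that Wu's width-reducing move never raises the bridge number: Wu's theorem as originally stated only tracks width, so this is precisely the point where one must walk through his construction rather than merely quote the conclusion. The meridional and $\partial$-parallel observations are routine by comparison. Assembling the cases, $P_K$ is incompressible and not $\partial$-parallel, hence essential; being meridional, it is an essential meridional surface in $E(K)$, as desired.
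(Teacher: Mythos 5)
Your proposal is correct and takes essentially the same route as the paper: defer to Wu's width-reducing argument for a compressible thinnest thin level, observing that it does not raise bridge number, so that a bridge-thin embedding cannot have a compressible thinnest thin level. You also spell out the $\partial$-parallel case (reducing it to $w(P)=2$ and an unknotted arc in one of the two balls, then removing redundant critical points), which the paper's one-sentence proof leaves implicit under the umbrella of Wu's result.
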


Finally, we may prove the main theorem from this section.

\begin{proof}[Proof of Theorem \ref{main2}]
First, we claim that $K$ is prime.  If not, there is an essential meridional annulus $A$ contained in $E(K)$.  Let $S = \pd B_1 = \pd B_2$ and choose $A$ so that $|A \cap S_K|$ is minimal.  If $S_K$ is c-compressible, then by Theorem \ref{tangdist2}, with the c-disk playing the role of the essential surface $F$, $d(\Sigma_1) \leq 2$ or $d(\Sigma_2) \leq 2$, a contradiction (note that when we apply Theorem \ref{tangdist2} $M=N=B_1$ or $M=N=B_2$). Hence, we can assume that $S_K$ is c-essential.

If $A \cap S_K \neq \emp$, then a curve $\gamma \subset A \cap S_K$ which is essential in $S_K$ gives rise to a cut disk for $S_K$, so $\gamma$ must be boundary parallel in $S_K$.  Choose such a $\gamma$ which is outermost, so that $\gamma$ bounds a once-punctured disk $D$ which avoids $A$.  However, now we may surger $A$ along $D$ to get a new essential annulus $A'$ which intersects $S_K$ fewer times.  It follows that $A \cap S_K = \emp$, so $A \subset B_1$ or $A \subset B_2$, which contradicts Lemma \ref{prime}; hence $K$ is prime.

Observe that there is an embedding $k'$ of $K$ with two thick levels parallel to $\Sigma_1$ and $\Sigma_2$ and one thin level parallel to $S = \pd B_1 = \pd B_2$, where $k'$ has bridge number $\n_1 + \n_2 - n$.  Suppose $F$ is any c-essential surface in $E(K)$. By Lemma \ref{essdist} applied to $(B_1,\tau_1)$, either $-\chi(F_K) > 2(\n_1 + \n_2 - n) - 2$ or $F \cap B_1 = \emp$ after isotopy.  In the second case, we may apply Lemma \ref{essdist} to $(B_2,\tau_2)$ to conclude that again $-\chi(F_K) > 2(\n_1 + \n_2 - n) - 2$, or else $F_K$ is isotopic to $S_K$.

Let $k$ be a bridge-thin position of $K$.  We note that if $k$ has a thick level of width at least $2(\n_1 + \n_2 - n)$, then $k$ has at least $\n_1 + \n_2 - n$ critical points and the theorem holds.  If $k$ is a bridge position with bridge sphere $\Sigma$, then by Lemma \ref{tangdist3} either $\Sigma \cap S = \emp$, $\Sigma$ is c-weakly reducible, or $-\chi(\Sigma) > 2(\n + \n' - n) - 2$.  However, the first case contradicts Lemma \ref{trivial}, and the second case implies that $k$ is not bridge-thin by Lemma \ref{cthin}.  In the third case, $k$ has a thick level of width greater than $2(\n_1+\n_2-n)$.

Suppose now that $k$ has a thin level.  By Theorem \ref{thinner1}, a thinnest thin level $R$ of $k$ is essential. Suppose that $R$ is cut-compressible. Let $\{D_i\}_{i=1}^{m}$ be a maximal sequence of cut-compressing disks taking $R$ to a cut-incompressible surface $R'$. Since the properties of being incompressible and non-boundary parallel are preserved under cut-compressing, all components of $R'$ are c-essential.  Cut-compressing $R$ along the $m-1$ cut disks $\{D_i\}_{i=1}^{m-1}$ yields a surface of some number of components, one of which, call it $R''$, contains $\pd D_m$. Let $R^{*}$ be the result of cut-compressing $R''$ along $D_m$. Since all essential curves in planar surfaces are separating, $R^*$ is a c-essential, planar, meridional surface of two components.

Since the components of $R^*$ are c-essential, either $-\chi(R_K) \geq -\chi(R^*_K) > 2(\n_1+\n_2 - n) - 2$, or each component of $R^*_K$ is isotopic to $S_K$.  In the first case, $k$ has a thick level of width greater than $2(\n_1 + \n_2 - n)$.  In the second case, since $R^{*}$ is obtained by cut-compressing $R''$, then $R''$ is obtained by tubing one component of $R^{*}$ to the other component of $R^{*}$ along a subarc of $K$. However, tubing together parallel surfaces results in a compressible surface. This contradicts the incompressibility of $R''$. Thus, we conclude that $R$ must be c-essential and as such $-\chi(R_K) > 2(\n_1 + \n_2 -n) - 2$ (completing the proof), or $R_K$ is parallel to $S_K$.

If $R_K$ is parallel to $S_K$, then $S$ is a level thin sphere with respect to $k$, and we may construct a new embedding $k^*$ from $k$ by pushing all maxima above all minima in both $B_1$ and $B_2$.  Note that $b(k^*) = b(k)$, and $k^*$ has exactly two thick spheres $\Sigma_1' \subset B_1$ and $\Sigma_2' \subset B_2$. By Theorem \ref{tangdist1}, either $\Sigma'_i$ is isotopic to $\Sigma_i$ or $-\chi((\Sigma_i)'_K) > -\chi((\Sigma_i)_K)$.  In the second case, $b(k) > b(k')$, a contradiction.  It follows that $\Sigma'_i$ is isotopic to $\Sigma_i$ for each $i\in \{1,2\}$ and $b(k) = b(k')$, as desired.
\end{proof}

\section{A high distance tangle}\label{high-distance}

In this section we use a result of Johnson and Moriah to construct a tangle with a high-distance bridge sphere that stays high distance even after many crossing changes. Their construction uses plat presentations of knots.

Let $\mathcal{B}_n$ denote the $n$-strand braid group. Given $\alpha \in \mathcal{B}_{2k}$ we can visualize $\alpha$ as a regular projection of $2k$ arcs in the plane such that each arc has one endpoint on the line $y=0$, has the other endpoint on the line $y=1$, is contained in the region between $y=0$ and $y=1$ and increases monotonically in $y$. By connecting each pair of consecutive endpoints of $\alpha$ on the line $y=1$ by an arc and similarly connecting pairs of consecutive endpoints on the line $y=0$, we form the projection of a knot $\hat \alpha$. Such a presentation of a knot is known as a \emph{$2k$-plat}.

There is a natural bridge sphere associated to a $2k$-plat. We can view the knot as being embedded in a neighborhood of its projection in the $xy$-plane in $\mathbb{R}^3$. Let $h:\mathbb{R}^3\rightarrow \mathbb{R}$ be projection onto the y-axis. After taking the one point compactification of each $xz$-plane intersecting the knot and gluing in 3-balls above and below, $h$ extends to a Morse function on $S^3$, and the 2-sphere $h^{-1}(\frac{1}{2})$ is a bridge sphere for the $2k$-plat. We call this the \emph{induced bridge sphere}.

In \cite{JM}, Johnson and Moriah study $2k$-plats of the form depicted in Figure \ref{Fig:JohnsonMoriah}. Recall that $\mathcal{B}_2$ is isomorphic to the integers, hence, the labels of $a_{i,j}$ in Figure \ref{Fig:JohnsonMoriah} dictate which element of $\mathcal{B}_2$ lies inside the corresponding braid box. A knot with a $2k$-plat presentation of the form depicted in Figure \ref{Fig:JohnsonMoriah} will be called a \emph{highly twisted plat}. Let $n$ be the number of rows associated to a highly twisted plat. Let $\lceil x \rceil$ be the ceiling function,
which is equal to the smallest integer greater than or equal to $x$.
Johnson and Moriah showed the following:

\begin{figure}[ht]
\centering
\includegraphics[scale=0.6]{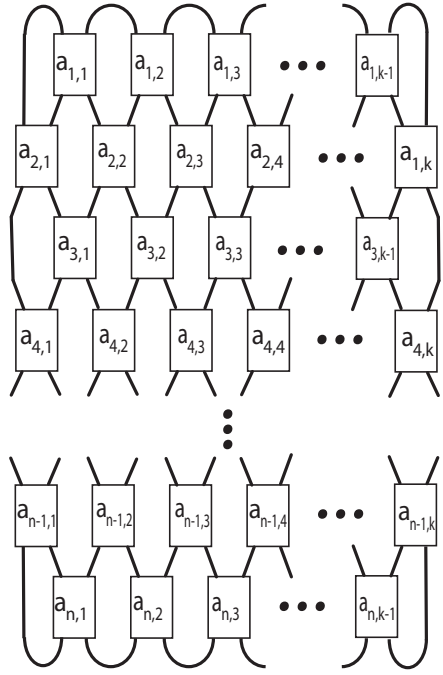}
\caption{The $2k$-plats studied by Johnson and Moriah}
\label{Fig:JohnsonMoriah}
\end{figure}

\begin{theorem}\label{thm:JohnsonMoriah}\cite{JM}
If $K \subset S^3$ is a highly twisted $2k$-plat with $k\geq 3$ and $|\, a_{i,j} |\,
\geq 3$ for all $i, j$, then $d(\Sigma) =  \lceil n /(2(k - 2))  \rceil$ where $\Sigma$ is the induced bridge sphere.
\end{theorem}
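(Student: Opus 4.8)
The plan is to prove the two inequalities $d(\Sigma)\le\lceil n/(2(k-2))\rceil$ and $d(\Sigma)\ge\lceil n/(2(k-2))\rceil$ separately, following the argument of Johnson and Moriah. Here $\Sigma_K$ is a $2k$-punctured sphere, so $k\ge 3$ ensures $\chi(\Sigma_K)<0$ and that $\mathcal C(\Sigma_K)$ is connected of infinite diameter, and we write $\mathcal D_+,\mathcal D_-\subset\mathcal C(\Sigma_K)$ for the disk sets of the two trivial tangles into which $\Sigma$ splits $(S^3,K)$, as in Section~\ref{bridge-surface}. The hypothesis $|a_{i,j}|\ge 3$ is only needed for the lower bound.

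For the upper bound I would exhibit an explicit edge-path in $\mathcal C(\Sigma_K)$ from $\mathcal D_+$ to $\mathcal D_-$. Start with a curve $c_0\in\mathcal D_+$ coming from a compressing disk of the top trivial tangle, concretely a separating curve adapted to the plat caps at the top of Figure~\ref{Fig:JohnsonMoriah}, chosen disjoint from as many of the uppermost twist boxes as the offset brick-wall arrangement on $2k$ strands allows. Sweeping $c_0$ downward (decreasing the height parameter of $h$), it can be kept disjoint from the level spheres until it is forced across a twist box that genuinely links the punctures it separates; because the boxes occur in a two-layered offset pattern, this happens only after about $2(k-2)$ rows. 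At that stage replace $c_0$ by a disjoint curve adapted to the next band of rows and iterate. After $\lceil n/(2(k-2))\rceil$ replacements the path reaches a curve bounding a disk in the bottom trivial tangle, so $d(\Sigma)\le\lceil n/(2(k-2))\rceil$.

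The lower bound is the core of the argument. Let $c_0,c_1,\dots,c_d$ be a geodesic in $\mathcal C(\Sigma_K)$ with $c_0\in\mathcal D_+$ and $c_d\in\mathcal D_-$; the goal is $d\ge\lceil n/(2(k-2))\rceil$. I would run a sweepout argument against the fixed Morse function $h$: for each $i$ let $J_i\subseteq[0,n]$ be the closure of the set of heights $t$ for which $c_i$ can be isotoped in $\Sigma_K$ off the level sphere $h^{-1}(t)$. A standard innermost-curve and normal-form argument shows that when $c_i$ and $c_{i+1}$ are disjoint they can be simultaneously isotoped off a common level, so $J_i\cap J_{i+1}\ne\emptyset$; and since $c_0,c_d$ bound disks in the outermost balls, $J_0$ contains the top of the plat and $J_d$ the bottom. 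Thus $\{J_i\}_{i=0}^d$ is a chain of overlapping closed intervals covering $[0,n]$. The crucial estimate is $\operatorname{diam}(J_i)\le 2(k-2)$ for every $i$: no single essential curve of $\Sigma_K$ is simultaneously isotopable off level spheres spanning more than $2(k-2)$ consecutive rows. Granting this, an accounting of how $d+1$ overlapping intervals of length at most $2(k-2)$ can tile $[0,n]$, being careful about the extreme intervals $J_0$ and $J_d$, gives $d\ge\lceil n/(2(k-2))\rceil$ and finishes the proof.

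The step I expect to be the main obstacle is the diameter estimate $\operatorname{diam}(J_i)\le 2(k-2)$, which is exactly where $|a_{i,j}|\ge 3$ and $k\ge 3$ enter. I would argue by contradiction: if $c:=c_i$ is disjoint from $h^{-1}(t_1)$ and $h^{-1}(t_2)$ with more than $2(k-2)$ rows between them, then after isotopy $c$ lies in the product region $h^{-1}([t_1,t_2])$ and is isotopic in $\Sigma_K$ into each of $h^{-1}(t_1)$ and $h^{-1}(t_2)$. But the braid filling that region is highly twisted, and by the brick-wall arrangement, once it spans more than $2(k-2)$ rows it contains, for every partition of the $2k$ punctures into two arcs other than the two extreme ones, a twist box with $|a_{i,j}|\ge 3$ straddling that partition. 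Since a twist of order at least $3$ moves the relevant curve of the associated $4$-punctured subsphere at least two (indeed at least three) steps in its curve complex, no isotopy class can serve as ``the same curve'' at the top and bottom of the region, a contradiction. The number $2(k-2)$ records the arithmetic of this dodge (the factor $2$ for the two offset layers, the $k-2$ for the extreme separating positions a curve can always avoid), and $k\ge 3$ is precisely what makes $k-2\ge 1$, so the estimate is nontrivial.
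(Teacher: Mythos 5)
This is a cited result: the paper states Theorem~\ref{thm:JohnsonMoriah} with the attribution \cite{JM} and gives no proof of it, so there is no in-paper argument to compare against. What follows is an assessment of your sketch on its own terms.

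Your overall scaffolding---prove the two inequalities separately, exhibit an explicit short path for the upper bound, run a sweepout-style accounting against a geodesic $c_0,\dots,c_d$ for the lower bound, with the real content concentrated in a uniform diameter estimate that uses $|a_{i,j}|\ge 3$---is compatible in spirit with what Johnson and Moriah do. But as written the proposal has gaps that are not merely details to fill in; some of the assertions are not yet well-formed.

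The most serious problem is the definition of the intervals $J_i$. Each $c_i$ is a curve in $\Sigma_K\subset h^{-1}(\tfrac12)$, and hence is \emph{already} disjoint from every other level sphere $h^{-1}(t)$, so ``$c_i$ can be isotoped in $\Sigma_K$ off the level sphere $h^{-1}(t)$'' is vacuous and cannot be the intended condition. Presumably you mean to transport $c_i$ through the product structure of the braid region to a curve in $h^{-1}(t)_K$ and ask for which $t$ the transported curve becomes inessential, or bounds a disk, in the outer tangle at that level; but once the definition is repaired, neither the overlap claim $J_i\cap J_{i+1}\ne\emptyset$ nor the covering claim $\bigcup J_i\supseteq[0,n]$ is a ``standard innermost-curve argument''---both need proofs, and the endpoint cases $J_0$, $J_d$ are exactly where the bridge-disk hypotheses must be invoked carefully.

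The diameter bound $\operatorname{diam}(J_i)\le 2(k-2)$ is, as you say yourself, the crux, and your argument for it is a promissory note rather than a proof. The local observation that a twist of order $\ge 3$ moves a curve at least two steps in the curve complex of a $4$-punctured subsphere is the sort of statement that bounded-geodesic-image / subsurface-projection technology can exploit, but you have not explained how the simultaneous constraints from the many twist boxes that a single curve sees are combined, nor why the arithmetic lands precisely on $2(k-2)$ rather than, say, $k-2$ or $2(k-1)$. ``The number $2(k-2)$ records the arithmetic of this dodge'' names the answer without exhibiting the dodge or proving its optimality. The upper bound has the same gap in miniature: ``sweeping downward until forced across a twist box'' must be converted into an actual chain of essential curves $c_0,\dots,c_d$ with consecutive curves disjoint and the count $d$ verified against the brick-wall geometry, not estimated as ``about $2(k-2)$ rows.'' Until the $J_i$ are correctly defined, the chain lemma is proved, and the $2(k-2)$ threshold is established with the twist-parameter hypothesis doing visible work, the proposal is a plan, not a proof.
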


We form the tangle $(B_3, \tau_3)$ depicted in Figure \ref{Fig:highdistbraid} in the following way: Let $L$ be a highly twisted $10$-plat. Considering the plat in $\R^3$ for the moment, let $\gamma_1$ and $\gamma_2$ be two arcs which are contained in the $xy$-plane at height $y=1$, such that $\gamma_1$ has endpoints in the the second and third maxima of $L$ and $\gamma_2$ has endpoints in the fourth and fifth maxima of $L$. Then $B_3=S^3\setminus(\eta(\gamma_1)\cup \eta(\gamma_2))$ and $\tau_3=L\cap B_3$. The induced bridge sphere $\Sigma$ for $L$ persists as a bridge sphere for $(B_3, \tau_3)$. Recall that the distance of $\Sigma$ as a bridge sphere for $L$ is the distance in $\mathcal{C}(\Sigma_L)$ between two disk sets $D_1$ and $D_2$. After drilling out $\gamma_1$ and $\gamma_2$ the distance of $\Sigma$ as a bridge sphere for $(B_3, \tau_3)$ is the distance between disk sets $D'_1$ and $D'_2$ where $D'_1\subset D_1$ and $D'_2 = D_2$. Hence, the distance of $\Sigma$ as a bridge surface of $(B_3, \tau_3)$ is greater than or equal to the distance of $\Sigma$ as a bridge surface for $L$. The following result immediately follows from Theorem \ref{thm:JohnsonMoriah}.

\begin{corollary}\label{B3distbound}
Let $\Sigma$ be the induced bridge sphere for $(B_3, \tau_3)$ and suppose $|\, a_{i,j} |\,
\geq 3$ for all $i, j$.  Then $d(\Sigma) \geq  \frac{n}{6}$.
\end{corollary}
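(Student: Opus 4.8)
The plan is to deduce the statement directly from Theorem \ref{thm:JohnsonMoriah} together with the comparison of disk sets recorded in the paragraph immediately preceding the statement; no new ideas are needed.

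First I would note that, by construction, $L$ is a highly twisted $2k$-plat with $2k = 10$, so $k = 5 \geq 3$, and the standing hypothesis $|\,a_{i,j}\,| \geq 3$ for all $i,j$ is precisely the hypothesis required by Theorem \ref{thm:JohnsonMoriah}. Applying that theorem to $L$, the induced bridge sphere $\Sigma$ of the plat satisfies
\[ d(\Sigma) = \left\lceil \frac{n}{2(k-2)} \right\rceil = \left\lceil \frac{n}{6} \right\rceil \]
as a bridge sphere for $L$, where $n$ is the number of rows of the plat.

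Next I would invoke the observation made just above the statement. The arcs $\gamma_1$ and $\gamma_2$ lie at height $y = 1$, hence are disjoint from $\Sigma = h^{-1}(\tfrac12)$, so $\Sigma_{\tau_3} = \Sigma_L$ and we are working in the same curve complex. Drilling out $\eta(\gamma_1) \cup \eta(\gamma_2)$ only removes material from the ball above $\Sigma$, so every curve bounding a compressing disk on that side of the drilled tangle $(B_3,\tau_3)$ also bounded one before drilling; thus the disk sets $D_1, D_2 \subset \mathcal{C}(\Sigma_L)$ of $L$ are replaced by disk sets $D_1' \subseteq D_1$ and $D_2' = D_2$ for $(B_3,\tau_3)$. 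Since $d(\Sigma) = \min\{ d(c_1,c_2) : c_i \in \mathcal{D}_i\}$ is a minimum over pairs drawn from the two disk sets, passing to subsets can only increase it. Therefore the distance of $\Sigma$ as a bridge sphere for $(B_3,\tau_3)$ is at least $\lceil n/6 \rceil \geq n/6$, as claimed.

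There is essentially no obstacle here: the entire content is carried by Theorem \ref{thm:JohnsonMoriah} and the monotonicity of distance under shrinking of disk sets. The only point deserving a moment's attention is the assertion — already made in the text — that $\Sigma$ remains a genuine bridge sphere for $(B_3,\tau_3)$, i.e.\ that the arcs of $\tau_3$ on the drilled side are still vertical or $\pd$-parallel in the punctured ball obtained by removing $\eta(\gamma_1) \cup \eta(\gamma_2)$ from the ball above $\Sigma$; this is immediate from the fact that $\gamma_1$ and $\gamma_2$ are taken to run between consecutive maxima of $L$ at the very top of the plat.
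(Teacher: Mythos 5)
Your proof is correct and follows the paper's approach exactly: apply Theorem \ref{thm:JohnsonMoriah} to the highly twisted $10$-plat $L$ (so $k=5$, giving $d(\Sigma)=\lceil n/6\rceil$), then use the observation that drilling out $\gamma_1,\gamma_2$ shrinks one disk set and leaves the other unchanged, so the minimum defining distance can only increase. The extra remark at the end about $\Sigma$ remaining a genuine bridge sphere for $(B_3,\tau_3)$ is a reasonable addition but matches what the paper simply asserts.
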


\begin{figure}[ht]
\centering
\includegraphics[scale=0.6]{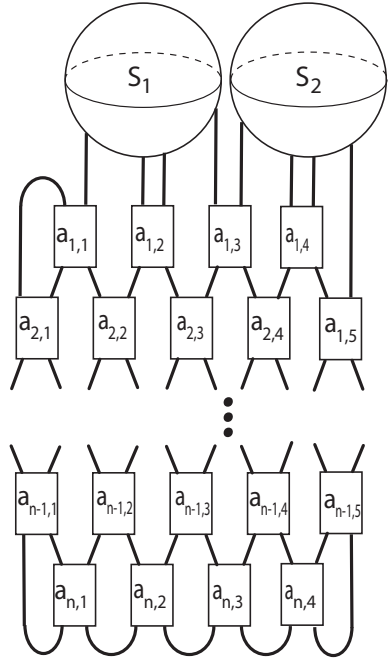}
\caption{The tangle $(B_3,\tau_3)$.}
\label{Fig:highdistbraid}
\end{figure}

\section{A knot with a compressible thin level}\label{Sec:DefinitionK}

We construct a knot $K$ in $S^3$ with a compressible thin level from $(B_3, \tau_3)$ in the following way. Let $S_1$ and $S_2$ be the two 2-spheres that make up $\partial B_3$. Let $(B_1,\tau_1)$ and $(B_2,\tau_2)$ be two 2-strand tangles with 3-bridge spheres $\Sigma_1$ and $\Sigma_2$. Glue $B_1$ to $B_3$ by identifying $\partial B_1$ with $S_1$ and glue $B_2$ to the result by identifying $\partial B_2$ with $S_2$, so that the result is a knot $K$ in $S^3$. Additionally, choose $\tau_1$ and $\tau_2$ such that $d(\Sigma_1)>12$  and $d(\Sigma_2)>12$. Similarly, choose $\tau_3$ such that $|\, a_{i,j} |\,
\geq 3$ for all $i, j$ and $n>72$. By Corollary \ref{B3distbound}, these restrictions imply that $d(\Sigma_3)>12$ where $\Sigma_3$ is the induced bridge sphere for $(B_3, \tau_3)$. Later in the paper we will need to put additional restrictions on the values of the $a_{i,j}$'s.

In the remainder of this section, we will establish several topological properties of $E(K)$ which will allow us to find thin position of $K$ in Sections \ref{Sec:thinposition} and \ref{final-case}.  From this point forward, we will suppress the notation $(S_1)_K$ and $(S_2)_K$ and consider $S_1$ and $S_2$ to be 4-punctured spheres.

\begin{lemma}\label{L1}
The 4-punctured spheres $S_1$ and $S_2$ are c-essential in $E(K)$.
\begin{proof}
Suppose by way of contradiction that $S_i$ is c-compressible for $i=1$ or 2, and choose a c-disk $D$ such that $|D \cap (S_1 \cup S_2)|$ is minimal.  If $\text{int}(D) \cap (S_1 \cup S_2) \neq \emp$, let $\gamma$ be a curve of intersection which is innermost in $D$, so that $\gamma$ bounds a c-disk $D' \subset D$ which avoids $S_1 \cup S_2$.  If $\gamma$ is essential in $S_i$, then $|D' \cap (S_1 \cup S_2)| < |D \cap (S_1 \cup S_2)|$, a contradiction.  If $\gamma$ is inessential in $S_i$, then either we can remove $\gamma$ with an isotopy of $D$, which contradicts the minimality of $|D \cap (S_1 \cup S_2)|$, or $\gamma$ is boundary parallel in $S_i$ and surgery of $D$ along $\gamma$ yields an essential annulus $A \subset B_i$, which contradicts Lemma \ref{prime}.

Thus, suppose that $\text{int}(D) \cap (S_1 \cup S_2) \neq \emp$.  Then $D$ is an essential surface in $(B_i,\tau_i)$ and $\partial D$ is essential in $S_i$, so by Theorem \ref{tangdist2}, we have $d(\Sigma_i) \leq 2$, a contradiction.
\end{proof}
\end{lemma}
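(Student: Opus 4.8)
The plan is to argue by contradiction: suppose some $S_i$ (say $S_1$) is c-compressible, and choose a c-disk $D$ for $S_1 \cup S_2$ that minimizes the number of intersections $|D \cap (S_1 \cup S_2)|$ over all c-disks for either surface. There are two cases, according to whether $\mathrm{int}(D)$ meets $S_1 \cup S_2$.

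First I would handle the case $\mathrm{int}(D) \cap (S_1 \cup S_2) \neq \emp$. Taking an innermost curve $\gamma$ of intersection in $D$, the subdisk $D' \subset D$ it bounds is itself a c-disk (compressing or cut, according to whether $\gamma$ bounds the unpunctured or once-punctured piece of $D \setminus \gamma$) disjoint from $S_1 \cup S_2$ in its interior, lying in some $S_i$. If $\gamma$ is essential in that $S_i$, then $D'$ is a c-disk for $S_i$ with strictly fewer intersections, contradicting minimality. If $\gamma$ is inessential in $S_i$, then either $\gamma$ bounds an unpunctured disk in $S_i$, in which case a standard cut-and-paste/isotopy across that disk removes $\gamma$ and lowers the intersection count (again contradicting minimality), or $\gamma$ is $\pd$-parallel in $S_i$ (bounds a once-punctured disk), in which case surgering $D$ along $D'$ and discarding the trivial piece produces, from the surgered remnant near $S_i$, a meridional annulus properly embedded in $B_i \setminus \eta(\tau_i)$; since $d(\Sigma_i) > 12 > 2$, Lemma \ref{prime} forces this annulus to be boundary-parallel, so it can be isotoped off, once more reducing the count. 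In every subcase we contradict minimality, so this case cannot occur.

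It remains to treat the case $\mathrm{int}(D) \cap (S_1 \cup S_2) = \emp$. Then $D$ is a properly embedded disk in the tangle $(B_i, \tau_i)$ (for the $i$ on whichever side $D$ lies) with $\pd D$ essential in $S_i$, and $D$ (being a compressing or cut disk) is an essential surface in $B_i \setminus \eta(\tau_i)$; moreover $\pd D = D \cap (\pd B_i)_K$ is essential in $(\pd B_i)_K = S_i$. Applying Theorem \ref{tangdist2} with $N = M = B_i$, $F = D$, and $\Sigma = \Sigma_i$: the third conclusion is vacuous since $D$ has a single boundary component essential in $S_i$ and cannot be boundary-parallel in $B_i \setminus \eta(\tau_i)$ (a boundary-parallel $D$ would give $\pd D$ inessential in $S_i$), and $\chi(D_K) \geq \chi(D) = 1 \geq -3$ is compatible only if we instead use the bound: we get $d(\Sigma_i) \leq 2 - \chi(D_K)$. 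Since $D$ is a disk with at most one puncture, $\chi(D_K) \geq 0$, so $d(\Sigma_i) \leq 2$, contradicting $d(\Sigma_i) > 12$. (Note there is an apparent typo in the statement's last line — ``suppose that $\mathrm{int}(D) \cap (S_1 \cup S_2) \neq \emp$'' should read $= \emp$ — which I would silently correct.) This completes the proof.

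The main obstacle is the $\pd$-parallel subcase in the first case: one must carefully identify the surface obtained by surgering $D$ along the innermost $\pd$-parallel curve as a genuine meridional annulus in $B_i \setminus \eta(\tau_i)$ rather than some other surface, and verify it is essential there (incompressible and not $\pd$-parallel) so that Lemma \ref{prime} applies and yields $d(\Sigma_i) \leq 2$. Everything else is routine innermost-disk and Euler-characteristic bookkeeping, combined with direct invocations of Lemma \ref{prime} and Theorem \ref{tangdist2}.
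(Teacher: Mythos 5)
Your proof takes the same approach as the paper: minimize $|D \cap (S_1 \cup S_2)|$ over all c-disks, handle an innermost curve of intersection by an isotopy or by producing a meridional annulus to feed into Lemma~\ref{prime}, and when $\text{int}(D)$ misses $S_1 \cup S_2$, apply Theorem~\ref{tangdist2} with $N = M = B_i$, $F = D$, $\Sigma = \Sigma_i$; you also correctly flag the typo in the last sentence (``$\neq\emp$'' should be ``$=\emp$''). One small correction in the final step: the second alternative of Theorem~\ref{tangdist2} bounds $\chi(\Sigma_K)$, not $\chi(F_K)=\chi(D_K)$ as your sentence implies (and incidentally $\chi(D_K) \leq \chi(D)$, since a puncture lowers Euler characteristic); that alternative is ruled out because $\Sigma_i$ is a $3$-bridge sphere for a $2$-strand tangle, so $\chi((\Sigma_i)_K) = 2 - 6 = -4 < -3$, after which the first alternative gives $d(\Sigma_i) \leq 2 - \chi(D_K) \leq 2$, as you concluded.
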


\begin{lemma}\label{L1A}
The knot $K$ is prime.
\begin{proof}
As in the proof of Theorem \ref{main2}, we suppose by way of contradiction that $E(K)$ contains an essential meridional annulus $A$, chosen to intersect $S_1$ and $S_2$ minimally.  First, suppose $A \cap (S_1 \cup S_2) \neq \emp$.  Any curve of intersection $\gamma$ which is essential in $S_i$ gives rise to a cut disk, so we may assume that such curves are boundary parallel in $S_i$.  Choosing such a $\gamma$ which is outermost in $S_i$ yields a once-punctured disk $D$ which avoids $A$, and performing surgery on $A$ along $D$ yields a new essential annulus $A'$ such that $|A' \cap (S_1 \cup S_2)| < |A \cap (S_1 \cup S_2)|$, a contradiction.

On the other hand, if $A \cap (S_1 \cup S_2) = \emp$, then $A \subset B_i$ for $i=1$, $2$, or $3$, which contradicts Lemma \ref{prime}.
\end{proof}
\end{lemma}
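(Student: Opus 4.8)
The plan is to argue by contradiction, in the spirit of the primality argument inside the proof of Theorem~\ref{main2}. Since $K$ being prime means precisely that $E(K)$ contains no essential meridional annulus, suppose $A$ is such an annulus. Using that $E(K)$ is irreducible and that $S_1$ and $S_2$ are c-essential, hence incompressible (Lemma~\ref{L1}), I would first isotope $A$ so that $|A \cap (S_1 \cup S_2)|$ is minimal; since $A$ is disjoint from $K$, the curves of $A \cap (S_1 \cup S_2)$ are unpunctured simple closed curves in the $4$-punctured spheres $S_1$ and $S_2$.

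Next I would eliminate these intersection curves. Each component $\gamma$ either bounds a disk in the annulus $A$ or is core-parallel in $A$, and (after pushing across balls, using irreducibility of $E(K)$) I may assume $\gamma$ bounds no unpunctured disk in $S_i$. If $\gamma$ bounds a disk in $A$ and is essential in $S_i$, then an innermost such disk is a compressing disk for $S_i$, contradicting Lemma~\ref{L1}. Otherwise $\gamma$ is core-parallel in $A$; choosing one closest to an end of $A$, the sub-annulus $A_0 \subset A$ that it cobounds with a meridian of $K$, capped off by a meridian disk of $\overline{\eta(K)}$, is a disk $D$ with $\partial D = \gamma$ and $|D \cap K| = 1$. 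If $\gamma$ is essential in $S_i$ then $D$ is a cut disk for $S_i$, again contradicting Lemma~\ref{L1}; if $\gamma$ is inessential in $S_i$, so that $\gamma$ bounds a once-punctured disk $D' \subset S_i$ with interior disjoint from $A$, then $A_0 \cup D'$ is parallel to a meridian disk of $\overline{\eta(K)}$, and pushing $A_0$ across it removes $\gamma$ from $A \cap (S_1 \cup S_2)$ without creating new intersections, contradicting minimality. Hence $A \cap (S_1 \cup S_2) = \emptyset$.

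Therefore $A$ lies in a single tangle, say $A \subset B_j \setminus \eta(\tau_j)$ for some $j \in \{1,2,3\}$. An incompressible, non-$\partial$-parallel surface of $E(K)$ lying inside $B_j \setminus \eta(\tau_j)$ remains incompressible there (a compressing disk would live in $E(K)$) and remains non-$\partial$-parallel (its boundary lies on $\partial\eta(K)$, so being parallel into that piece of $\partial(B_j \setminus \eta(\tau_j))$ would make $A$ boundary parallel in $E(K)$), so $A$ is an essential meridional annulus in $(B_j, \tau_j)$. But $(B_j, \tau_j)$ has a bridge sphere $\Sigma_j$ with $d(\Sigma_j) > 12 > 2$, and $\Sigma_j$ meets $\tau_j$ in at least six points, so $\chi(\Sigma_j \setminus \eta(\tau_j)) \leq -4 < -3$; then Lemma~\ref{prime} forces $(B_j, \tau_j)$ to be prime, a contradiction. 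Hence no such $A$ exists and $K$ is prime.

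I anticipate the main obstacle to be the second paragraph: making precise, through innermost and outermost choices and cut-and-paste in the irreducible manifold $E(K)$, that intersection curves essential in $S_i$ genuinely yield c-disks there (so that Lemma~\ref{L1} applies), while the remaining inessential curves can be removed by an isotopy of $A$ that keeps it an annulus disjoint from $K$ and strictly decreases $|A \cap (S_1 \cup S_2)|$. Once $A$ has been confined to a single tangle, the conclusion follows immediately from Lemma~\ref{prime} and the distance hypotheses.
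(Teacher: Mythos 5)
Your proof follows essentially the same route as the paper: assume an essential meridional annulus $A$ exists, isotope it off $S_1 \cup S_2$ using the c-incompressibility of those spheres from Lemma~\ref{L1} (plus the parity constraint that a sphere cannot meet $K$ an odd number of times), and then derive a contradiction with Lemma~\ref{prime} once $A$ is confined to a single $B_j$. Your version is somewhat more explicit than the paper's --- you separate the ``$\gamma$ bounds a disk in $A$'' case (producing a compressing disk) from the ``$\gamma$ is core-parallel in $A$'' case (producing a cut disk via capping off), and you actually verify the distance and Euler characteristic hypotheses of Lemma~\ref{prime}, whereas the paper compresses the intersection argument into ``gives rise to a cut disk'' and cites Lemma~\ref{prime} without checking hypotheses --- but this is filling in detail, not a different argument. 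One small imprecision: when you pick $\gamma$ ``closest to an end of $A$'' and then assert the once-punctured disk $D' \subset S_i$ has interior disjoint from $A$, that disjointness is not automatic from your choice; one should instead pass to a curve outermost in $S_i$ (as the paper does) or to an innermost curve inside $D'$, after first ruling out the essential and unpunctured cases. This is a routine fix and does not affect the correctness of the overall approach.
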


\begin{lemma}\label{L2}
Suppose that $F$ is a c-essential planar meridional surface in $E(K)$.  Then either $F_K$ is isotopic to $S_1$ or $S_2$, or $-\chi(F_K) > 10$.
\begin{proof}
By Lemma \ref{L1A}, $K$ is prime, so by Lemma \ref{essdist}, we have that either $-\chi(F_K) > 10$ or after isotopy $F \cap B_i = \emp$ for $i = 1,2$. Hence, $F$ is a c-essential planar meridional surface embedded in the interior of $B_3$. By Lemma \ref{essdist} applied to $B_3$, $-\chi(F_K) > 10$ or $F_K$ is isotopic to $S_1$ or $S_2$.
\end{proof}
\end{lemma}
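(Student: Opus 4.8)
The plan is to play the distance hypotheses off against Euler characteristics, applying Lemma~\ref{essdist} in turn to each of the three tangles $(B_1,\tau_1)$, $(B_2,\tau_2)$, and $(B_3,\tau_3)$ and using $d(\Sigma_i)>12$ to discard every conclusion except the two in the statement. First I would assemble the standing facts. By Lemma~\ref{L1A} the knot $K$ is prime, and by Lemma~\ref{L1} the $4$-punctured spheres $S_1$ and $S_2$ are incompressible in $E(K)$; since $F_K$ is planar and meridional, capping off its meridian punctures exhibits $F$ as a $2$-sphere in $S^3$, hence $F$ is separating, and $F_K$ is incompressible because $F$ is c-essential. Then I would record the numerology: $\Sigma_1$ and $\Sigma_2$ are $3$-bridge spheres for $2$-strand tangles, so each meets $K$ in six points and $\chi((\Sigma_i)_K)=2-6=-4$, while $\Sigma_3$ is the bridge sphere induced by a $10$-plat, so it meets $K$ in ten points and $\chi((\Sigma_3)_K)=2-10=-8$. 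In particular $\chi((\Sigma_i)_K)<-3$ for $i=1,2,3$, so the conclusion ``$\chi(\Sigma_K)\ge -3$'' of Lemma~\ref{essdist} is never available.

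Next I would apply Lemma~\ref{essdist} to $(B_1,\tau_1)$ with bridge sphere $\Sigma_1$, taking $F$ in the role of the closed separating surface; its hypotheses hold by the previous paragraph. The second conclusion is excluded, so either the first conclusion holds, giving $-\chi(F_K)\ge d(\Sigma_1)-2>10$ and finishing the proof, or the third conclusion holds. In the latter case the third conclusion produces, after some cut compressions and an isotopy, a surface disjoint from $B_1$; but $F$ is c-incompressible and hence admits no cut disks, so no cut compressions occur and $F$ itself can be isotoped off $B_1$. I would then repeat the argument with $(B_2,\tau_2)$ and $\Sigma_2$: again either $-\chi(F_K)>10$ (done), or $F$ can also be isotoped off $B_2$. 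Since $S^3=B_1\cup_{S_1}B_3\cup_{S_2}B_2$, a surface disjoint from both $B_1$ and $B_2$ lies in $\text{int}(B_3)$.

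It remains to treat the case $F\subset\text{int}(B_3)$, where $F_K$ is still incompressible. Here I would apply Lemma~\ref{essdist} one last time, to $(B_3,\tau_3)$ with bridge sphere $\Sigma_3$; the second conclusion is again unavailable since $\chi((\Sigma_3)_K)=-8$. The first conclusion gives $-\chi(F_K)\ge d(\Sigma_3)-2>10$, using $d(\Sigma_3)>12$, which holds by Corollary~\ref{B3distbound} together with the standing hypotheses $|a_{i,j}|\ge 3$ and $n>72$. The third conclusion, on the other hand, carries the clause that when $F$ is isotopic into $B_3$ the surface $F_K$ is parallel to a component of $(\partial B_3)_K$; since $F$ literally lies inside $B_3$, this forces $F_K$ to be parallel to $S_1$ or $S_2$. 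This exhausts the cases.

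I do not expect a serious obstacle: the whole argument is the observation that $d(\Sigma_i)>12$ and $\chi((\Sigma_i)_K)\le -4$ are together just strong enough to collapse Lemma~\ref{essdist} to the two alternatives in the statement. The only point needing a little care is that the three invocations of Lemma~\ref{essdist} compose cleanly --- after $F$ has been isotoped off $B_1$, the isotopy supplied when handling $B_2$ must not reintroduce intersections with $B_1$. This is exactly where Lemma~\ref{L1} earns its keep: because $S_1$ and $S_2$ are incompressible in $E(K)$, intersections of $F$ with them can always be removed by innermost-disk arguments performed inside the complementary ball, so $F$ can be made disjoint from $B_1$ and $B_2$ simultaneously.
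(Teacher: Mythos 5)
Your proposal is correct and follows essentially the same route as the paper: invoke Lemma~\ref{L1A} for primeness, apply Lemma~\ref{essdist} successively to $(B_1,\tau_1)$, $(B_2,\tau_2)$, and $(B_3,\tau_3)$ with the distance bounds $d(\Sigma_i)>12$, and read off the two surviving alternatives. The paper's proof is much terser, leaving implicit the checks that $F$ is closed, separating, and incompressible, the computation $\chi((\Sigma_i)_K)\le -4$ that rules out the middle conclusion of Lemma~\ref{essdist}, the observation that c-incompressibility of $F$ eliminates the cut compressions, and the compatibility of the isotopies off $B_1$ and $B_2$; your write-up fills in these points without changing the underlying argument.
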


\begin{lemma}\label{L3}
Suppose that $F$ is an essential meridional 6-punctured sphere in $E(K)$.  Then $F$ is isotopic to $S_1$ tubed to $S_2$ along an arc of $\tau_3$.
\begin{proof}
By Lemma \ref{L2}, such an $F$ must be cut-compressible. Since $K$ is prime, cut-compressing $F$ yields two 4-punctured spheres $F_1$ and $F_2$. As incompressibility is preserved under cut-compressions and $K$ is prime, both $F_1$ and $F_2$ are c-essential. Hence, $F$ may be constructed by tubing $F_1$ to $F_2$ along an arc in $K$.  If $F_1$ is isotopic to $F_2$, then $F$ is compressible, and so $F_1$ and $F_2$ are non-isotopic.  By Lemma \ref{L2} (possibly after relabeling), $F_1$ is isotopic to $S_1$ and $F_2$ is isotopic to $S_2$, as desired.
\end{proof}
\end{lemma}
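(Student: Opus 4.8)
The plan is to run a cut‑compression analysis on $F$, with Lemma \ref{L2} supplying the dichotomy at each stage. Since $F$ is a $6$‑punctured sphere it is not isotopic to the $4$‑punctured spheres $S_1$ or $S_2$, and $-\chi(F_K) = 4 \not> 10$; hence Lemma \ref{L2} shows $F$ is not c‑essential. As $F$ is incompressible by hypothesis, the obstructing c‑disk must be a cut disk $C$, and the first step is to analyze the surface obtained by cut‑compressing $F$ along $C$.

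I would pin down the number of punctures of the resulting pieces by a parity count. The curve $\partial C$ separates $F \cong S^2$ into two disks carrying, say, $a$ and $6-a$ of the points of $F \cap K$, with $a, 6-a \geq 2$ because $\partial C$ is essential in $F_K$. Capping each of these disks off with a parallel copy of $C$ produces an embedded $2$‑sphere meeting $K$ transversely in $a+1$, respectively $(6-a)+1$, points; since any $2$‑sphere meets $K$ in an even number of points, $a+1$ is even, so $a = 3$. Thus cut‑compressing $F$ along $C$ yields two $4$‑punctured spheres $F_1$ and $F_2$, and conversely $F$ is recovered from $F_1 \sqcup F_2$ by tubing along a subarc of $K$, as described in Section \ref{prelim}.

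Next I would identify $F_1$ and $F_2$. Incompressibility is preserved under cut‑compression, so each $F_i$ is an incompressible meridional $4$‑punctured sphere; it is not $\partial$‑parallel, since $\partial E(K)$ is a torus, and it carries no cut disk, since cut‑compressing along one would again produce a $2$‑sphere meeting $K$ in an odd number of points by the parity argument above. Hence each $F_i$ is c‑essential, and Lemma \ref{L2} forces $F_i$ to be isotopic to $S_1$ or to $S_2$, as $-\chi((F_i)_K) = 2 \not> 10$. If $F_1$ and $F_2$ were isotopic to the same $S_i$, then $F$ would be a tube joining two parallel copies of $S_i$, hence compressible — the same mechanism used in the proof of Theorem \ref{main2} — contradicting that $F$ is essential. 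So $\{F_1, F_2\} = \{S_1, S_2\}$ up to isotopy. Isotoping $F_1$ onto $S_1$ and $F_2$ onto $S_2$, the tubing arc joins $S_1$ to $S_2$; a standard normalization making its interior disjoint from $S_1 \cup S_2$ places it in $B_3$, so it is an arc of $\tau_3$, which is the desired conclusion.

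The only genuine inputs are Lemma \ref{L2} and the elementary fact that tubing parallel surfaces together produces a compressible surface; everything else is bookkeeping. The mildest obstacle is organizing the cut‑compression and the closing isotopy carefully enough that $F$ is exhibited as $S_1$ tubed to $S_2$ along a single arc of $\tau_3$, rather than along some more roundabout arc of $K$.
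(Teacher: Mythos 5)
Your proposal is correct and follows essentially the same route as the paper: invoke Lemma \ref{L2} to show the $6$-punctured sphere $F$ must be cut-compressible, cut-compress to two c-essential $4$-punctured spheres which Lemma \ref{L2} identifies (up to isotopy) as $S_1$ and $S_2$, and then reassemble $F$ by tubing. You fill in a bit more detail than the paper does — the explicit parity count giving the $4{+}4$ split, the observation that a planar surface cannot be parallel into a torus, and the final normalization placing the tubing arc in $B_3$ — but the structure and key inputs are identical.
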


\section{Thin position of $K$}\label{Sec:thinposition}
First, we will describe our candidate $k'$ for thin position of $K$, and then we will eliminate all other possibilities via an exhaustive argument, proving that $k'$ is width minimizing.  The position $k'$ is depicted in Figure \ref{Fig:thinK}.  By construction $k'$ has exactly three maxima and exactly one minimum in each of $B_1$ and $B_2$, with all of the maxima above the minimum in each of these 3-balls. Similarly, $k'$ has exactly one maximum and exactly five minima in $B_3$, with the maximum above all of the minima.  Thus, $k'$ has three thick spheres, $\Sigma_1$, $\Sigma_3$, and a third which we will call $\Sigma_2'$.  Note that $\Sigma_2'$ intersects $B_2$ in a surface which is isotopic to $\Sigma_2$ after a single compression.  The two thin spheres of $k'$ are $S_1$ and an 8-punctured sphere we will denote by $S_3$. Note that after a single compression $S_3$ is isotopic to $S_1\cup S_2$. The thin-thick tuple for $k'$ is $(10,8,10,4,6)$, which gives $w(k') = 78$ by equation (\ref{alt}).

\begin{figure}[ht]
\centering
\includegraphics[scale=0.6]{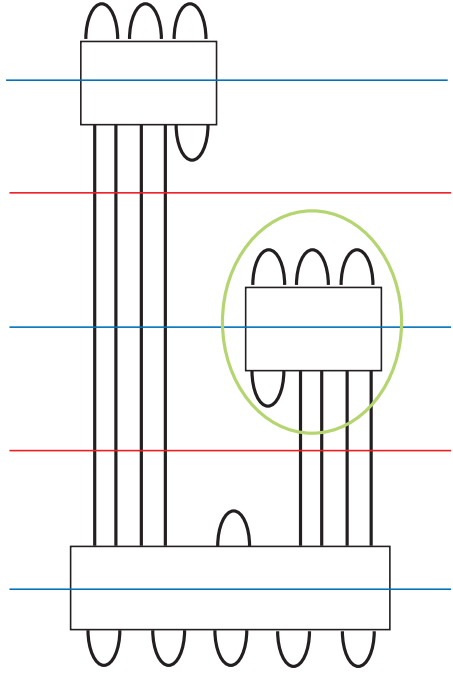}
\caption{This embedding is $k'$, the candidate thin position of $K$.}
\label{Fig:thinK}
\end{figure}

Let $k$ denote any other embedding of $K$.  We will use a case-by-case analysis to show $w(k) \geq 78$.

\begin{case}\label{C1}
The embedding $k$ is a bridge position.
\begin{proof}
Suppose that $k$ is a bridge position with bridge sphere $\Sigma$.  By Lemma \ref{tangdist3}, either $\Sigma$ can be made disjoint from $S_1$ via an isotopy transverse to $K$, $\Sigma$ is c-weakly reducible, or $-\chi(\Sigma_K) > 10$.  In the first case, $S_1$ is an essential surface for one of the trivial tangles bounded by $\Sigma$, a contradiction to Lemma \ref{trivial}.  In the second case, $k$ is not thin by Lemma \ref{cthin}.  Finally, if $-\chi(\Sigma_K) > 10$, then $w(\Sigma) \geq 14$ and thus $w(k) \geq 98$ by inequality (\ref{ineq1}).
\end{proof}
\end{case}

It follows that $k$ must have a thin level.

\begin{case}\label{C3}
The embedding $k$ has exactly one thin level.
\begin{proof}
Let $S$ be the unique thin level for $k$. By Theorem \ref{thinnest}, we can assume that $S$ is c-essential. By Lemma \ref{L2}, either $S_K$ is isotopic to $S_1$ or $S_2$, or $-\chi(S_K) > 10$.  In the latter case, $k$ has a thick level with at least 16 punctures, and using inequality (\ref{ineq1}) we get $w(k) \geq 128$. Hence, we can assume that $S_K$ is isotopic to $S_1$ or $S_2$.

Without loss of generality, suppose that $S_K$ is isotopic to $S_1$ and that $B_1$ is above $S_1$. Let $\Sigma_1'$ denote the thick level above $S_1$. We could easily show that $\Sigma_1' = \Sigma_1$, but for this case we need only that $w(\Sigma_1') \geq 6$, which follows immediately from the fact that $K$ meets $B_1$ in a non-trivial tangle.  Let $\Sigma$ denote the thick level below $S_1$.  It follows that $\Sigma$ is a bridge sphere for $(S^3 \setminus \eta(B_1), K \setminus \eta(\tau_1))$, and so by Theorem \ref{tangdist1}, we have that $-\chi(\Sigma_K) \geq d(\Sigma_3) - 2 > 10$ or else $\Sigma$ is isotopic to $\Sigma_3$ after some number of c-compressions of $\Sigma \cap B_3$.  In the first case, $w(\Sigma) \geq 14$ and $w(k) \geq 98$. In the second case, if no c-compressions are necessary, then $\Sigma$ is isotopic to $\Sigma_3$. In this case, $S_2$ is an essential surface embedded in the trivial tangle cobounded by $\Sigma$ and $S_1$, this contradicts Lemma \ref{trivial}. If it is necessary to c-compress $\Sigma$ to produce a surface isotopic to $\Sigma_3$, then $w(\Sigma) > w(\Sigma_3) = 10$, so $w(\Sigma) \geq 12$, and by equation (\ref{alt}),
\[ w(k) \geq \frac{1}{2}\left(6^2 + 12^2 - 4^2\right) = 82.\]
\end{proof}
\end{case}

\begin{case}\label{C4}
The embedding $k$ has two or more thin levels, and one has width at least 8.
\begin{proof}
If $S$ is a thin level of $k$ and $w(S) = 8$, then $k$ has two thick levels adjacent to $S$ with width at least 10. Since $K$ is prime, any other thin level must have width at least 4 and any thick level not adjacent to $S$ must have width at least 6. Hence, we have
\[ w(k) \geq  \frac{1}{2}\left(6^2 + 10^2 +10^2 - 4^2 - 8^2\right) = 78.\]
\end{proof}
\end{case}

\begin{case}\label{C5}
The embedding $k$ has exactly two thin levels, both of width 4.
\begin{proof}
As in Case \ref{C3}, we may assume that one of the thin levels is isotopic to $S_1$.  If $S$ is the other thin level with width 4, then $S$ is a thinnest thin level since $K$ is prime. By Theorem \ref{thinner}, $S$ is essential and, since $K$ is prime, must also be c-essential. By Lemma \ref{L2}, $S$ must be isotopic to $S_1$ or $S_2$, or $k$ is not thin.  But if $S$ is isotopic to $S_1$, then by the isotopy extension theorem we may replace the embedded arcs of $k$ between $S_1$ and $S$ with vertical arcs, reducing $w(k)$.  Thus, $S$ is isotopic to $S_2$.  Let $\Sigma$ denote the thick level between $S_1$ and $S_2$, and note that the region between $S_1$ and $S_2$ is $B_3$.

By Theorem \ref{tangdist1}, either $-\chi(\Sigma_K) \geq d(\Sigma_3) - 2 > 10$, or after isotopy, $\Sigma$ c-compresses to a surface isotopic to $\Sigma_3$.  In the first case $w(\Sigma) \geq 14$, and in the second case, we note that $\Sigma$ separates $S_1$ and $S_2$, while $\Sigma_3$ does not, so we must c-compress $\Sigma$ at least once before it is isotopic to $\Sigma_3$, implying that $w(\Sigma) > w(\Sigma_3) = 10$.  In either case, $w(\Sigma) \geq 12$, and by equation (\ref{alt}),
\[ w(k) \geq \frac{1}{2}\left(6^2 + 12^2 +6^2 - 4^2 - 4^2\right) = 92.\]
\end{proof}
\end{case}

\begin{case}\label{C6}
The embedding $k$ has three or more thin levels.
\begin{proof}Note that since $K$ is prime, all thin levels of $k$ have width at least 4 or $k$ is not thin.
If at least one thin level of $k$ has width at least 8, then

\[ w(k) \geq \frac{1}{2}\left(6^2 + 6^2 +10^2 +10^2 - 4^2 - 4^2 - 8^2\right) = 88.\]

Hence, we can assume that all thin levels for $k$ have width 4 or 6.

First, consider the case when $k$ has at least two thin levels of width 6. Call these thin levels $S$ and $S'$. By Theorems \ref{thinner} and \ref{plustwo}, $S$ and $S'$ are incompressible, which implies that both $S$ and $S'$ are constructed by tubing $S_1$ to $S_2$ along some arc of $\tau_3$, by Lemma \ref{L3}. Since $S$ and $S'$ are disjoint, they are constructed by tubing $S_1$ to $S_2$ along a common arc of $\tau_3$. Hence, $S$ is isotopic to $S'$. By the isotopy extension theorem we may replace the embedded arcs of $k$ between $S$ and $S'$ with vertical arcs, reducing $w(k)$.

Next, consider the remaining cases when $k$ has two or more thin levels of width 4. Call two such thin levels $S$ and $S'$. As argued in Case \ref{C5}, each of $S$ and $S'$ are isotopic to $S_1$ or $S_2$. Without loss of generality, suppose $S$ is isotopic to $S_1$. If $S'$ is isotopic to $S_1$, then by the isotopy extension theorem we may replace the embedded arcs of $k$ between $S$ and $S'$ with vertical arcs, reducing $w(k)$.  Thus, $S'$ is isotopic to $S_2$.  Let $S''$ be a third thin level of $k$. Since $4\leq w(S'') \leq 6$, and $S''$ cannot be isotopic to $S_1$ or $S_2$, then $w(S'') = 6$.  By Theorem \ref{plustwo}, $S'$ is incompressible, which implies that $S''$ may be constructed by tubing $S_1$ to $S_2$ by Lemma \ref{L3}.

However, any three level spheres cut $S^3$ into four components (two components that are homeomorphic to $B^3$ and two components that are homeomorphic to $S^2\times I$), where any component is adjacent to one or two other components.  On the other hand, $S^3 \setminus \eta(S_1 \cup S_2 \cup S'')$ has four components, one of which (the component containing the tube) is adjacent to all three other components, a contradiction.
\end{proof}
\end{case}

In summary, assuming $k$ is thin, we have ruled out Cases \ref{C1}, \ref{C3} and \ref{C6}; hence we can assume that $k$ has exactly two thin levels. We have also ruled out Cases \ref{C4} and \ref{C5}, so we can assume that one thin level has width 4 and the other has width 6. As previously argued, any thin level of width 4 must be isotopic to $S_1$ or $S_2$. Without loss of generality, we will assume that $S_1$ is a thin level. This one remaining case (the case in which $k$ has two thin levels, one isotopic to $S_1$ and the other having width 6) is more complicated than the others, and we examine it in the next section.

\section{The final remaining case}\label{final-case}

To complete the proof of the main theorem, we suppose that $k$ has two thin levels, one of which is $S_1$, and the other of which we denote by $S^*$. By Theorem \ref{plustwo}, $S^*$ is incompressible, which implies that $S^*$ may be constructed by tubing $S_1$ to $S_2$ along an arc of $\tau_3$, by Lemma \ref{L3}.

By Corollary \ref{B3distbound}, if $(B_3, \tau_3)$ is altered in a way that increases $|a_{ij}|$ for any $i$ and $j$, then the distance bound on $\Sigma_3$ is preserved and all of the properties of $K$ and $k$ that were established in Section \ref{Sec:DefinitionK} and Section \ref{Sec:thinposition} are preserved. Denote the strands of $\tau_3$ by $t_1$, $t_2$, $t_3$ and $t_4$. To motivate our choices of $a_{ij}$, we need a clear picture of the surface $S^*$.

Suppose for the moment that $S^*$ is $S_1$ tubed to $S_2$ along $t_1$, the leftmost arc in Figure \ref{tangler2}.  Let $B^*$ denote that ball bounded by $S^*$ which is disjoint from $S_1 \cup S_2$, and let $\tau^* = K \cap B^*$.  Then $(B^*,\tau^*)$ is a 3-strand tangle.  Now, we may construct an isotopy of $S^*$ which drags $S_1$ from one endpoint of $t_1$ to the other.  An appropriate analogy is to picture a cord being pulled back into a vacuum cleaner, where in this setting $S_2$ takes the place of the vacuum cleaner.  As such, each of the three arcs of $(B^*,\tau^*)$ could be constructed by taking one of $t_2$, $t_3$, or $t_4$ and attaching its endpoint to $t_1$ in $S_1$.  See Figure \ref{tangler2}.  We wish to show that, under certain assumptions on the parameters $a_{ij}$, each of these arcs is knotted in $B^*$.

\begin{figure}[h!]
  \centering
    \includegraphics[width=.7\textwidth]{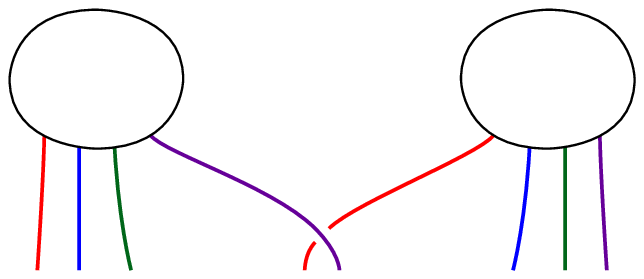}
    \includegraphics[width=.7\textwidth]{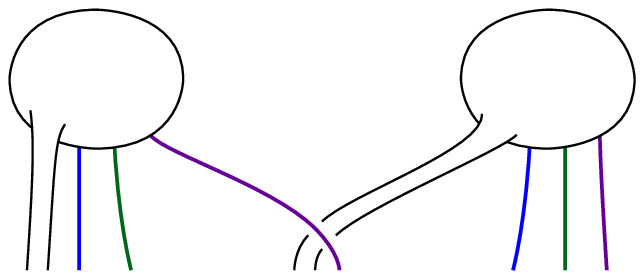}
    \includegraphics[width=.7\textwidth]{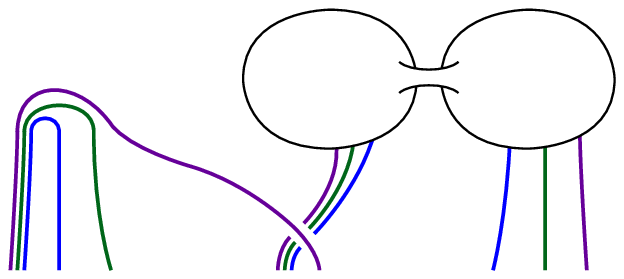}
    \caption{At top, a portion of the knot $K$, shown with $S_1$ and $S_2$.  In the middle, tubing $S_1$ to $S_2$ along $a$ yields $S^*$.  At bottom, an alternate picture of $S^*$ created by dragging $S_1$ along $a$ until it meets $S_2$.}
    \label{tangler2}
\end{figure}

We determine the parameters $a_{ij}$ in the following way: $a_{1,1}$, $a_{1,3}$, $a_{2,3}$, $a_{2,4}$, $a_{3,2}$, $a_{3,3}$ and $a_{3,4}$ are odd positive integers greater than or equal to 3, $a_{1,2}$, $a_{1,4}$, $a_{2,1}$, $a_{2,2}$, $a_{2,5}$ and $a_{3,1}$ are even integers with absolute value greater than 3. Moreover, we require $a_{2,1}$ and $a_{2,2}$ to be negative and $a_{3,1}$ to be positive. If $i$ is even and greater than 3, then $a_{i,j}$ is a negative even integer less than -3 for all $j$. If $i$ is odd and greater than 4, then $a_{i,j}$ is a positive even integer greater than or equal to 3 for all $j$. These restrictions are depicted in Figure \ref{tanglechoice}. In that figure, a label of $E$ indicates that $a_{i,j}$ is even and a label of $O$ indicates $a_{i,j}$ is odd. Similarly, the superscripts in the figure indicate the sign of $a_{i,j}$. The absence of a superscript indicates that choosing a sign is unnecessary for our construction.

\begin{figure}[h!]
  \centering
    \includegraphics[scale=0.8]{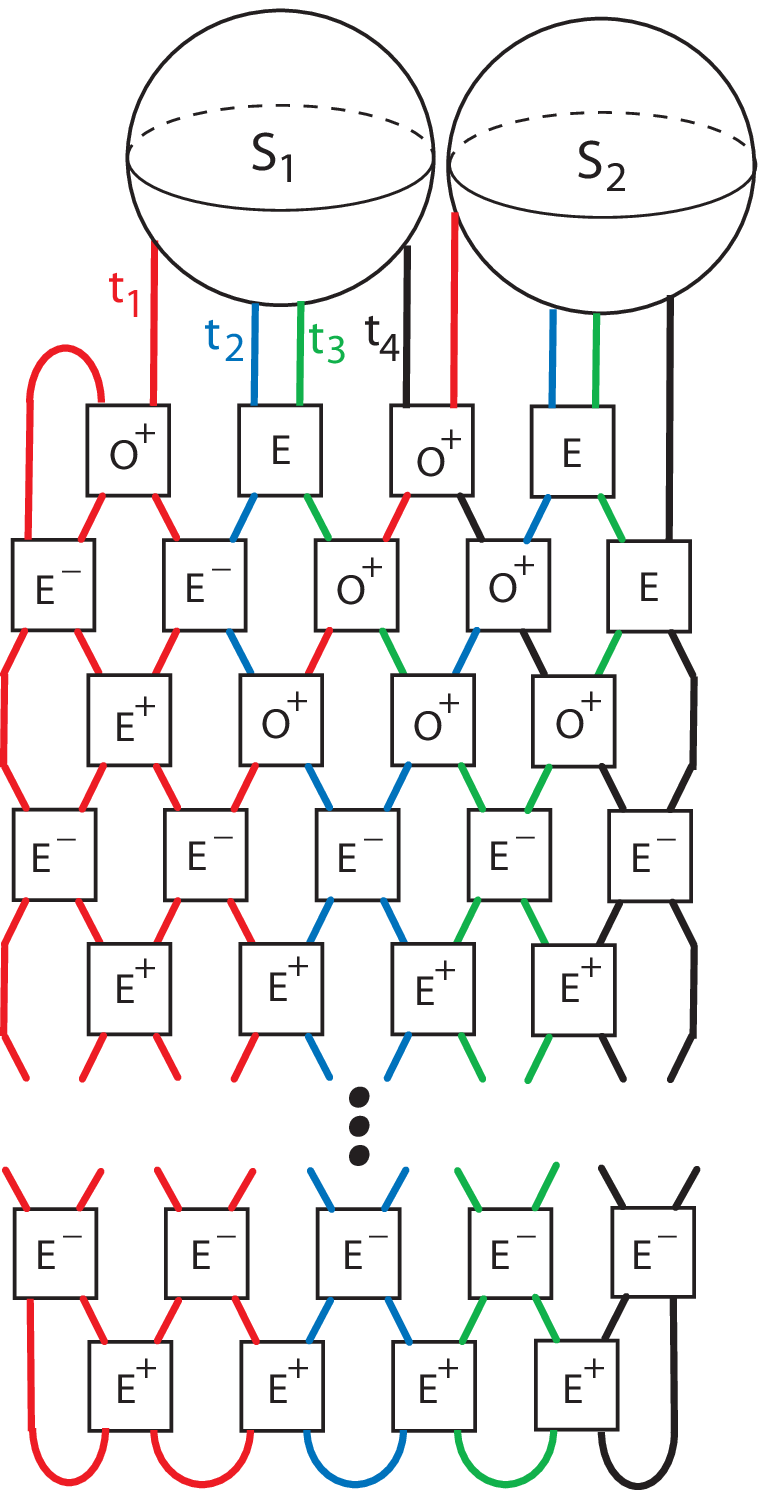}
    \caption{Restrictions on the $a_{i,j}$ for the tangle in $B_3$.}
    \label{tanglechoice}
\end{figure}

Note that there are unique arcs $s_{12}^1$ and $s_{12}^2$ connecting the endpoints of $t_1$ and $t_2$ in $S_1$ and $S_2$ when we ignore the endpoints of the strands of $\tau_3$ that are not $t_1$ or $t_2$. Let $K_{12}$ denote the knot $t_1 \cup s_{12}^1\cup t_2 \cup s_{12}^2$. We can similarly define the knots $K_{ij}$ where $i<j$. Using Figure \ref{tanglechoice}, we can easily obtain diagrams for each of these knots. These diagrams are depicted in Figure \ref{Ks}. Note that each of these diagrams is reduced and alternating. Hence, each of the knots $K_{12}$, $K_{13}$, $K_{14}$, $K_{23}$, $K_{24}$ and $K_{34}$ is nontrivial.

\begin{figure}[h!]
  \centering
    \includegraphics[width=.7\textwidth]{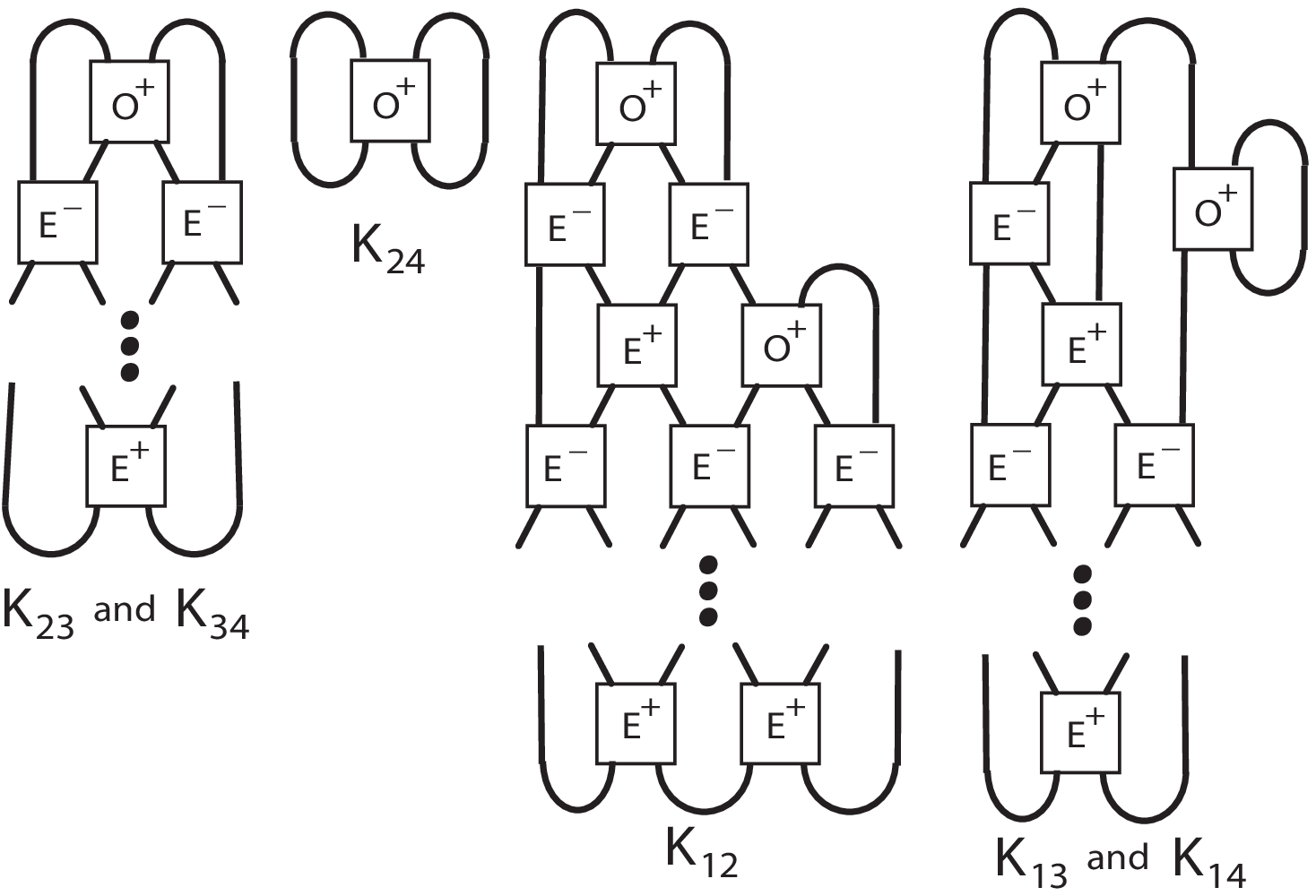}
    \caption{Diagrams for each of the knots $K_{12}$, $K_{13}$, $K_{14}$, $K_{23}$, $K_{24}$ and $K_{34}$.}
    \label{Ks}
\end{figure}

\begin{case}\label{C7}
The embedding $k$ has two thin levels, one of width 4 and the other of width 6.
\begin{proof}
As above, we may assume without loss of generality that one of the thin levels is $S_1$ and the other, $S^*$, is the result of tubing $S_1$ to $S_2$ along some strand $t_i$ of $\tau_3$.  Note that $S^*$ bounds two 3-balls in $S^3$; let $B^*$ denote the one which is disjoint from $B_1 \cup B_2$.  Suppose without loss of generality that $S_1$ is above $S^*$ with respect to the height function $h$. Hence, $k$ has a thick level $\Sigma^*$ below $S^*$, which implies that $\Sigma^* \subset B^*$.

Let $\tau^* = K \cap B^*$, and consider the 3-strand tangle $(B^*,\tau^*)$, noting that $\Sigma^*$ is a bridge surface for $(B^*,\tau^*)$.  We will show that each arc $t$ of $\tau^*$ intersects $\Sigma^*$ at least four times.  If not, then $(B^*,t)$ is a trivial 1-strand tangle.  However, we may view $t$ as the union of $t_i$ and $t_j$ with the unique arc $s^{2}_{ij}$ connecting $\pd t_i$ to $\pd t_j$ in $S_2$.  If $|t \cap \Sigma^*| = 2$, then (ignoring the other strands of $\tau^*$) $t$ is isotopic into $S^*$, and it follows that one of the knots $K_{12}$, $K_{13}$, $K_{14}$, $K_{23}$, $K_{24}$ or $K_{34}$ is the unknot, a contradiction.

We conclude that $w(\Sigma^*) \geq 12$.  Since $k$ has two other thick levels of widths at least 6 and 8, we have
\[ w(k) \geq \frac{1}{2}\left(6^2 + 8^2 +12^2 - 4^2 - 6^2\right) = 96.\]
\end{proof}
\end{case}

In summary, we have the following:

\begin{proof}[Proof of Theorem \ref{main}]
Let $k'$ be the embedding of $K$ depicted in Figure \ref{Fig:thinK}.  Noting that $S_3$ is a compressible thin level of $k'$, we need only show that $w(K) = 78$, so that $k'$ is thin.  Let $k$ be a thin position of $K$. By Cases \ref{C1}, \ref{C3} and \ref{C6}, we can assume that $k$ has exactly two thin levels. By Cases \ref{C4} and \ref{C5}, we can assume that one thin level has width 4 and the other has width at least 6.  If we assume that the thin levels have width exactly 4 and 6, then $w(k) \geq 96$ by Case \ref{C7}, so $k$ is not thin.  It follows that the three thick levels of $k$ have widths at least 6, 10, and 10; hence
\[ w(k) \geq \frac{1}{2}\left(6^2 + 10^2 +10^2 - 4^2 - 8^2\right) = 78.\]
\end{proof}

\end{document}